\documentclass[12pt,a4paper]{article}
\pdfoutput=1
\usepackage{hyperref}

\usepackage{amsfonts}
\usepackage{amsmath}

\usepackage{bm}
\usepackage{mathdots}
\usepackage{graphicx}
\usepackage{amsthm}

\usepackage{xcolor} 

\newcommand{\ud}{\mbox{d}}
\newtheorem{theorem}{Theorem}[section]
\newtheorem{corollary}{Corollary}[theorem]
\newtheorem{lemma}{Lemma}[section]
\usepackage{cleveref}

\usepackage{authblk}

\title{The reflectionless properties of Toeplitz waves and Hankel waves: an analysis via Bessel functions }

\author[1,2]{Kevin Burrage}
\author[1]{Pamela Burrage}
\author[3]{Shev MacNamara}

\affil[1]{ARC Centre of Excellence for Mathematical and Statistical Frontiers, and Mathematical Sciences School,\;\;\;\;\;\;\;\;\;\;\;\;\;\;\;\;\;
      Queensland University of Technology (QUT), Australia.}
\affil[2]{Visiting Professor, Computer Science Department,  \;\;\;\;\;\;\;\;\;\;\;\;\;\;\;\;\; University of Oxford, UK.}
\affil[3]{ARC Centre of Excellence for Mathematical and Statistical Frontiers, School of Mathematical and Physical Sciences, University of Technology Sydney, Australia}

\begin{document}

\maketitle

\begin{abstract}
We study reflectionless properties at the boundary  for the wave equation  in one space dimension and time, in terms of a  well-known matrix that arises from a simple discretisation of space. 
It is known that all matrix functions of the familiar second difference matrix representing the Laplacian in this setting are the sum of a Toeplitz matrix and a Hankel matrix.
The solution to the wave equation is one such matrix function. 
Here, we study the behaviour of the corresponding waves that we call Toeplitz waves and Hankel waves. 
We show that these waves can be written as certain linear combinations of even Bessel functions of the first kind.
We find exact and explicit formulae for these waves. 
We also show that the Toeplitz and Hankel waves are reflectionless on even, respectively odd, traversals of the domain.
Our analysis naturally suggests a new method of computer simulation that allows control, so that it is possible to choose --- in advance --- the number of reflections.
An attractive result that comes out of our analysis is the appearance of  the well-known shift matrix, and also other matrices that might be thought of as Hankel versions of the shift matrix.
By revealing the algebraic structure of the solution in terms of shift matrices, we make it clear how the Toeplitz and Hankel waves are indeed reflectionless at the boundary on even or odd traversals.
Although the subject of the reflectionless boundary condition has a long history, we believe the point of view that we adopt here in terms of matrix functions is new.  
\end{abstract}

~\\

\textbf{keywords}  \;\;\;\;\;\;\;\;\;\;\;
  Toeplitz waves, \;\; Hankel Waves, \;\; One--way waves, \;\; Bessel functions,  \;\; Matrix functions  ~\\

\textbf{MSC} \;\; 65N06, \;\;  15A60, \;\; 65F30, \;\; 65F60

~\\
~\\

\section{Introduction}
``One--way waves'' are similar to solutions of wave equations but they move in only one direction, and exhibit no propagation in the opposite direction.
They are important in the subject of absorbing boundary conditions  \cite{EngquistMajda77}.
Trefethen \& Halpern  study the well-posedness of one--way wave equations, and they give a nice  overview of the history  \cite{TrefethenHalpern1986}. 
The literature on reflectionless boundary conditions, and on applications of Bessel function expansions to waves, is extensive. 
We do not attempt a survey here. 
An incomplete list of authors includes: Grote, Keller, Givoli, Bayliss \& Turkel, Higdon, Ting \& Miksis, Lubich \& Schadle, Alpert, Greengard \& Hagstrom. 
Particular attention is paid to the issue of whether or not the proposed reflectionless boundary conditions are exact or approximate, and whether or not the conditions are local or global.
There are also important connections to the Kirchoff formula and to the Dirichlet--to--Neumann map, and to Born series, that we do not consider here.

We distinguish three settings:
\begin{itemize}
\item Fully continuous: both time and space are continuous.
 The reflectionless boundary condition is the Sommerfeld condition, which in our setting might be applied by imposing  
\begin{equation}
\frac{\ud u}{\ud x } = \pm \frac{\ud u}{\ud t }
\label{eq:orr-sommerfeld}
\end{equation}
on the solution $u(x,t)$ at the left and right boundaries.
The choice of the plus or minus sign depends on whether the wave is traveling left or right. 
 Notice this condition \eqref{eq:orr-sommerfeld} is local in time and local in space. 
\item Fully discrete: Both time and space have been discretized. 
Engquist  \&  Majda \cite[Equation (5.3)]{EnquistMajda79} address the issue of the nature of the condition in this setting. 
Their reflectionless boundary condition is not a simple expression. 
It is not local in time and it is not local in space.
 
\item Semi-discrete: time remains continuous and space is discretized.
Intuitively, we expect the conditions to be local in time, and non-local in space.
This situation has been studied by Halpern   \cite[Section 3]{halpern1982absorbing}. 
\end{itemize}
A well-known model in these settings is a set of masses connected by  linear springs, which can be generalised to higher dimensions through an array of springs. 
The resulting framework is then a linear system of ordinary differential equations.

\noindent
The focus of our article is on the semi-discrete setting, where time remains continuous.
 However, from equation \eqref{eq:eq6} onwards we find it more convenient to evaluate our expressions at certain discrete time points.
 (Note that our evaluations here are \textit{exact} for the semi-discrete setting, as opposed to what is commonly done in the literature in the  `fully-discrete' setting where some further approximation is introduced.)
In our semi-discrete setting, the solution to the wave equation can be thought of as a matrix function \cite{StrMac14}.
Very recently Nadukandi \& Higham have shown how to efficiently compute these wave--kernel matrix functions \cite{WaveKernelsHigham}.

It is important to note that our purpose in this article is not to study the reflectionless boundary condition \textit{per se}, nor to study the big field of inverse problems to which it relates; these topics already have an extensive literature.
The boundary condition in particular,  is an old problem that has been studied by many authors; among the early literature, perhaps the work of  Halpern   \cite{halpern1982absorbing} is closest to our own framework. 
Instead, our purpose is to reveal the properties of the Toeplitz waves and the Hankel waves that we will introduce later. 
During the course of our analysis, it transpires that these Toeplitz and Hankel waves possess some attractive reflectionless properties.

We revisit the problem with a new perspective, by carefully revealing the algebraic structure of those matrix functions.
We find particularly explicit and exact formulas.
An attractive  outcome of our perspective is to show that the travelling wave can be considered as the sum of two waves that we call the Toeplitz wave and the Hankel wave. 
We show that these waves can be written as certain linear combinations of even index Bessel functions of the first kind. These expansions, as we will explain, make it clear that the Toeplitz wave and Hankel wave are reflectionless on even, respectively odd, traversals of the domain.

In \cref{sec:2}, we give some background on Toeplitz and Hankel matrix functions.
 In \cref{sec:3}, we show how to write a travelling wave  as the sum of two waves that we call a Toeplitz wave and a Hankel wave. 
In \cref{sec:4}, we give analytic expressions for the Toeplitz and Hankel waves in terms of certain sums of even index Bessel functions of the first kind.
In \cref{sec:5}, we show  simulations of these waves using the methods of Nadukandi \& Higham \cite{WaveKernelsHigham}, and also using methods inspired by our analytic expressions.
With our analysis, we are now able to algebraically explain interesting behaviours that have previously been observed in numerical simulations in the literature  \cite{macStrang2016}.
We also showcase another benefit of our analysis, namely that it suggests a method of simulation that allows us to choose, in advance, the number of reflections.
  The paper concludes in \cref{sec:discussion} with some observations and discussion for future work.

\section{Toeplitz--plus--Hankel matrix functions}
\label{sec:2}
This section collects together some of the key results from \cite{StrMac14} concerning Toeplitz-plus-Hankel matrix functions that we will need later.

The central difference approximation to the Laplacian, which in one space dimension is simply the second derivative, $-\frac{\partial^2}{\partial x^2}$, is $\bm{K}/h^2$, where $\bm{K}$ is the $N \times N$ tridiagonal, symmetric positive definite Toeplitz matrix:
\begin{equation}
\bm{K} =
\left(
\begin{tabular}{ccccc}
$2$ & -1 \\
 -1 & $2$ & -1 \\
  & $\ddots$ & $\ddots$ & $\ddots$ \\
 & & -1 & $2$ & -1 \\
  &  & &   -1 & 2
\end{tabular}
\right)
\label{eq:K}
\end{equation}
and 
\begin{equation}
h = \frac{1}{N+1}.
\label{eq:h}
\end{equation}
We can diagonalize the matrix $\bm{K}  =  \bm{V} \bm{\Lambda} \bm{V}^\top  =  \sum_{k=1}^N \lambda_k \bm{v}_k \bm{v}_k^\top $ where the eigenvalues appear in the diagonal matrix $\bm{\Lambda}$ and where the eigenvectors form the columns of $\bm{V}$.
Given a function $f$ of a scalar argument, we define a matrix function \cite{NicholasHighamBook08}  via  diagonalization:
\begin{equation}
f(\bm{K}) = \bm{V} f(\bm{\Lambda}) \bm{V}^\top = \sum_{k=1}^N f(\lambda_k) \bm{v}_k \bm{v}_k^\top.
\label{eq:matrix:function:diagonalization}
\end{equation}
The eigenvalues of the matrix in \eqref{eq:K} are:
\begin{equation}
 \lambda_k = 2 - 2 \cos (k \pi h),  \qquad k = 1, \dots, N, 
\label{eq:eigenvalues}
\end{equation}
and the eigenvectors are:
\begin{equation}
\bm{v}_k = \sqrt{2h} \Big( \sin (k \pi h) , \sin (2 k \pi h), \dots , \sin (N k \pi h)   \Big)^\top, \qquad k = 1, \dots, N.
\label{eq:eigenvectors}
\end{equation}
With these explicit expressions, the $(m,n)$ entry of the matrix function is 
\begin{equation}
f(\bm{K})_{m,n} = 2h  \sum_{k=1}^{N} f(\lambda_k) \sin(m k \pi h) \sin(n k \pi h).
\label{eq:matrix:function:K}
\end{equation}

 \noindent 
 Recall that Toeplitz matrices are those with constant diagonals, whereas Hankel matrices have constant anti-diagonals. 
We now define the strong Toeplitz-plus-Hankel property (as in \cite{StrMac14}): a matrix has the  strong Toeplitz-plus-Hankel property when all matrix functions are  the sum of a Toeplitz matrix and a Hankel matrix.
For the purpose of this definition and this article, we only consider matrices that are diagonalizable, and by all matrix functions, we only consider those functions that come via diagonalization, as defined above.
 The  strong Toeplitz-plus-Hankel property is equivalent to requiring that each rank one projection matrix coming from the eigenvectors, can be written as a sum of a Toeplitz matrix and a Hankel matrix.
 The matrix $\bm{K}$ has this strong property, as we will now demonstrate.

Entries of the  rank one symmetric matrices, denoted $\bm{v}_k \bm{v}_k^\top$ that project onto eigenspaces, are products of sines, of the form $\sin(m \theta) \sin(n \theta)$.
Recall the trigonometric identity
\begin{equation}
2 \sin \theta_1 \sin \theta_2 = \cos (\theta_1-\theta_2) - \cos(\theta_1+\theta_2).
\end{equation}
Thus each entry in $\bm{v}_k \bm{v}_k^\top$ can be written as the sum of a term  of the form $\cos((m-n)\theta )$ that leads to a Toeplitz matrix, and another term of the form $ \cos((m+n)\theta ) $ that leads to a Hankel matrix.
Thus, for $k=1,\ldots,N$, the rank one matrix 
\begin{equation}
\bm{v}_k \bm{v}_k^\top   = \bm{T}_k + \bm{H}_k  
\label{eq:V=T+H}
\end{equation}
is the sum of a Toeplitz matrix, with  $(m,n)$ entries
\begin{equation}
 \big( \bm{T}_k \big)_{m,n} = \; \, h \cos \big( (m\bm{-}n) k \pi h \big) 
 \label{eq:T:part}
\end{equation}
and a Hankel matrix
\begin{equation}
  \big( \bm{H}_k \big)_{m,n} = \bm{-} h \cos \big( (m\bm{+}n) k \pi h \big), 
   \label{eq:H:part}
\end{equation}
recalling that $h = \frac{1}{N+1}$.

A sum of Toeplitz matrices is again a Toeplitz matrix, and similarly Hankel matrices are closed under addition.
Recalling \eqref{eq:matrix:function:diagonalization}, \eqref{eq:matrix:function:K} and \eqref{eq:V=T+H}, we see that as $\bm{K}$ has the strong Toeplitz-plus-Hankel property,  all matrix functions of $\bm{K}$ are the  sum of a Toeplitz matrix and a Hankel matrix:
\begin{equation}
 f(\bm{K})    =  \sum_{k=1}^N f(\lambda_k) (\bm{T}_k + \bm{H}_k )  
   =    \underbrace{ \sum_{k=1}^N f(\lambda_k) \bm{T}_k }_\text{Toeplitz}  \;\;  + \;\; \underbrace{\sum_{k=1}^N f(\lambda_k)  \bm{H}_k }_\text{Hankel} .
 \label{eq:diagonalize:TplusH}
\end{equation} 

\noindent 
A few observations are noteworthy.
In general, a matrix cannot be written exactly as the sum of a Toeplitz matrix and a Hankel matrix.

\noindent  When it is possible to express a matrix exactly as the sum of  a Toeplitz matrix  and a Hankel matrix, the Toeplitz-plus-Hankel splitting is not unique.
The reason is that there is a two dimensional space of matrices that are simultaneously both Toeplitz and Hankel (examples of such matrices are displayed in \eqref{eq:A:B:simultaneous:TplusH}).
One possible basis for that space is formed by two matrices: the all ones matrix, in which every entry is 1, and the checkerboard matrix in which every entry is of magnitude 1, together with an alternating pattern of $\pm$ signs.
Thus there is a somewhat arbitrary choice as to where to include these matrices (they could go into either the Toeplitz part or the Hankel part, possibly also with some weighting).
In this article we always make the particular choice that is implicit when we use  \eqref{eq:T:part} and \eqref{eq:H:part}.

It is helpful to consider the action of the Toeplitz and Hankel parts on a vector `input.'
To illustrate this action, consider the so-called shift matrix, which is an example of a Toeplitz matrix.
In this particular example, we see the action is  a `forward shift' that preserves the orientation of the input:
\begin{equation}
\left(
\begin{tabular}{ccccc}
 &  \\
 1 &  &  \\
  &1  &  &  \\
  & & 1  &  \\
  & &   &1  &
\end{tabular}
\right) 
\left(
\begin{tabular}{c}
 0  \\
 a  \\
 b   \\
 c  \\
 0
\end{tabular}
\right)
=
\left(
\begin{tabular}{c}
 0 \\
 0 \\
 a   \\
  b   \\
  c   \\
\end{tabular}
\right).
\end{equation}
Define $N \times N$ shift matrices $E_1, \cdots, E_{N-1}$ where $E_k$ corresponds to the matrix with 1 on the $k$th upper subdiagonal and 0 elsewhere.
We will need these matrices later in \eqref{eq:eqEk:shift:matrix}.
Let  $E_0=I$ be the identity matrix.
These matrices and their transposes form a natural basis, $\{E_0, E_1, \ldots,  E_{N-1}, E_1^\top, \ldots  E_{N-1}^\top\}$, for the space of $N \times N$ Toeplitz matrices.
That is, an arbitrary  $N \times N$ Toeplitz matrix $T$, with first row $t_0, t_1, \ldots t_{N-1}$ and first column $t_0, t^1  \ldots, t^{N-1}$, is a linear combination of these shift matrices: $T=t_0 I  +  t_1 E_1 \ldots t_{n-1} E_{N-1} + t^1E_1^\top + \ldots t^{N-1} E_{N-1}^\top$.
Thus, we can think of the action of the Toeplitz matrix as a linear combination of these shifts \cite{gray2006toeplitz}.
Likewise, to illustrate the action of a Hankel matrix, consider the `Hankel version of the shift matrix.'
In this example, we see a `backward shift' (which is also known as a reversal permutation) that reverses the orientation of the input:
\begin{equation}
\left(
\begin{tabular}{ccccc}
& & &  1&  \\
& &   1&  &  \\
 &  1&  &  &  \\
1 & & \\
& 
\end{tabular}
\right)
\left(
\begin{tabular}{c}
 0  \\
 a  \\
 b   \\
 c  \\
 0
\end{tabular}
\right)
=
\left(
\begin{tabular}{c}
 c   \\
  b   \\
  a   \\
  0 \\
  0
\end{tabular}
\right).
\end{equation}
We define a set  of such $N \times N$ matrices, with ones on a backwards diagonal and which we denote by  $F_k$, and which are depicted  later in \eqref{eq:backward:hanel:shifts}.
These matrices are related to the shift matrices by multiplying by the anti-identity matrix $J$, that is $F_k = J E_k$ for $k=1,\ldots,N-1$, $F_N=J E_0 = J I =J$, and $F_{N+k} = J E_k^\top$ for $k=1,\ldots,N-1$.
The set of matrices $\{F_k \}$ form a natural basis for the space of $ N \times N$ Hankel matrices.
That is, an arbitrary Hankel matrix $H$ can be written as a linear combination $H = h_1F_1 + \ldots h_{2N-1}F_{2N-1}$, where the weights $h_k$ in the combination come from the backward diagonals of the Hankel matrix.

  \section{The wave equation is Toeplitz--plus--Hankel}
  \label{sec:3}
This section summarizes observations from \cite{macStrang2016} that we will need later.
It transpires that the Toeplitz part of the solution to the wave equation  can be thought of as a type of solution that does not feel the boundaries on even traversals of the domain, whereas the Hankel part of the solution does not feel the boundaries on odd traversals of the domain.

Consider the wave equation 
\begin{equation}
\frac{\partial^2 }{\partial t^2} \bm{u}  = \frac{\partial^2}{\partial x^2} \bm{u}
\label{eq:wave}
\end{equation}
on  the domain $-1 \leq x \leq 1$ with zero Dirichlet boundary conditions $u(-1,t) = u(1,t) =0$.
In the semi-discrete setting, the continuous wave equation \eqref{eq:wave} is approximated on an equally spaced grid of points $-1, \ldots, - \Delta x,  0, \Delta x, \ldots, 1$, with spacing $\Delta x = 2/(N-1)$, by the linear system of ordinary differential equations
\begin{equation}
\frac{\ud^2}{\ud t^2} \bm{u} = - \frac{\bm{K}}{\Delta x^2} \bm{u}
\label{eq:wave:semi:discrete}
\end{equation}
 with the matrix from \eqref{eq:K}.
 It can be quickly checked by differentiating twice that a solution to our semi-discrete model in \eqref{eq:wave:semi:discrete} is the matrix function
 \begin{equation}
\bm{u}(t) = f(\bm{K}) \bm{u}(0)  = \cos \left( \pm t \frac{\sqrt{\bm{K}}  }{ \Delta x} \right) \bm{u}(0). 
\label{eq:solution}
\end{equation}
Here we are also using the square root matrix function, and
 we restrict attention to the special case of a wave equation involving a symmetric graph Laplacian, of which the matrix $\bm{K}$ in \eqref{eq:K} is an example.
This solution \eqref{eq:solution} could be compared with the representations of the solutions presented by Nadukandi \& Higham \cite{WaveKernelsHigham}, who are able to efficiently compute solutions to the wave equation, even in the more general situation where  the graph Laplacian matrix in question is not symmetric.

A second order differential equation such as \eqref{eq:wave:semi:discrete} requires two initial conditions.
In our solution \eqref{eq:solution}, we are assuming an initial condition $\bm{u}(0)$, and we are also assuming zero initial velocity.
If the initial velocity, $\bm{u}^{'}(0)$, is not zero then the solution involves an extra term: 
\begin{equation}
 \bm{u}(t) = \cos \left(t \frac{ \sqrt{\bm{K}} }{  \Delta x}\right)  \bm{u}(0) +    \Delta x \> \bm{K}^{-\frac{1}{2}} \sin \left(t \frac{\sqrt{\bm{K}} }{ \Delta x}\right)  \;  \bm{u}^{'}(0) .
\label{eq:semidiscrete:wave:eqn:sol:dalembert}
\end{equation}
Our article focuses on the solution in \eqref{eq:solution}.

To connect this solution \eqref{eq:solution} to the matrix function point of view let us make the particular choice of function
 \begin{equation}
 f(z) \equiv \cos \left( t \frac{\sqrt{z} }{ \Delta x} \right).
 \end{equation}
Define
\begin{equation}
 \bm{T} \equiv \sum_{k=1}^N f(\lambda_k) \bm{T}_k
\label{eq:Twave:T}
\end{equation}
 with $\bm{T}_k$ as in \eqref{eq:T:part} but now $h = \Delta x = \frac{2}{N-1}$, 
  and 
\begin{equation}
  \bm{H} \equiv \sum_{k=1}^N f(\lambda_k) \bm{H}_k,
  \label{eq:Hwave:H}
\end{equation}
   with $ \bm{H}_k$ as in \eqref{eq:H:part}.
The spacing $\Delta x = \frac{2}{N-1}$ is chosen so that the mesh consists of $N$ equally-spaced points, including the endpoints -1 and 1.
 Now  via \eqref{eq:diagonalize:TplusH},  the solution \eqref{eq:solution} to the wave equation is the sum of a Toeplitz wave, $\bm{T} \bm{u}(0)$,  and a Hankel wave, $\bm{H} \bm{u}(0)$.
 That is,
 \begin{equation}
\bm{u}(t) = \;  f(\bm{K}) \bm{u}(0) \; =  \;\;\;\;  \underbrace{\bm{T} \bm{u}(0)}_\text{Toeplitz wave} \;\;\;\; + \;\;\;\;   \underbrace{\bm{H} \bm{u}(0).}_\text{Hankel wave} 
\label{eq:Twave:plus:Hwave:solution}
\end{equation}
  
Next, we seek to  to obtain  explicit expressions for the Toeplitz  and Hankel waves in terms of certain finite sums of Bessel functions of the first kind.

  \section{Expressions for the Toeplitz and Hankel waves as certain sums of Bessel functions}
\label{sec:4}

In this section we derive explicit representations of the Toeplitz and Hankel waves on a symmetric one dimensional domain with a symmetric initial condition. We will show that the nonconstant Toeplitz wave traverses the domain on odd traversals of the domain (the first traversal is a half traversal), while the nonconstant Hankel wave appears on even traversals. These explicit expressions are given as a finite sum of even Bessel functions of the first kind and the number of traversals appears as a parameter on the total number of Bessel functions required. In order to construct the Toeplitz and Hankel waves we need some preliminary lemmas and definitions, some of which we collect from \cref{sec:2}, but here we specialise the expressions to a more convenient form. 


\begin{lemma}
Let $\bm{K} = \textrm{tridiag}(-1, 2, -1)$ be the $N \times N$ tridiagonal matrix with 2 on the diagonal and -1 on the first upper and lower subdiagonals as displayed in \eqref{eq:K}; then the eigenvalues $\lambda_k, \> k=1, \cdots, N$ of $\bm{K} $ satisfy
\begin{equation}
\sqrt{\lambda_k} = 2 \sin \frac{k \pi}{2(N+1)}, \quad k = 1,\cdots,N.
\label{eq:eq1}
\end{equation}
\end{lemma}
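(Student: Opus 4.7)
The plan is to start from the eigenvalue formula already recorded in equation \eqref{eq:eigenvalues}, namely $\lambda_k = 2 - 2\cos(k\pi h)$ with $h = 1/(N+1)$, and reduce it to the form claimed via a single trigonometric half-angle identity. Concretely, I would rewrite
\begin{equation*}
\lambda_k \;=\; 2\bigl(1 - \cos(k\pi h)\bigr) \;=\; 4\sin^2\!\Bigl(\tfrac{k\pi h}{2}\Bigr) \;=\; 4\sin^2\!\Bigl(\tfrac{k\pi}{2(N+1)}\Bigr),
\end{equation*}
using the standard identity $1-\cos\theta = 2\sin^2(\theta/2)$ in the middle step and substituting $h = 1/(N+1)$ at the end.

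Taking the principal square root then gives $\sqrt{\lambda_k} = 2\,\bigl|\sin\bigl(\tfrac{k\pi}{2(N+1)}\bigr)\bigr|$. To drop the absolute value, I would note that for $k = 1, \ldots, N$ the argument $\tfrac{k\pi}{2(N+1)}$ lies strictly in the open interval $\bigl(0,\tfrac{\pi}{2}\bigr)$, on which the sine function is strictly positive. Hence the absolute value is redundant and one obtains \eqref{eq:eq1} directly.

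There is no real obstacle here: the result is a one-line consequence of the well-known spectrum of the discrete Laplacian stated in \eqref{eq:eigenvalues}, combined with the half-angle identity. The only point worth stating explicitly, for completeness, is the sign check on $\sin\bigl(\tfrac{k\pi}{2(N+1)}\bigr)$ so that the square root is unambiguously the positive one.
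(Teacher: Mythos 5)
Your proof is correct and follows essentially the same route as the paper: both start from the known spectrum \eqref{eq:eigenvalues} and apply the half-angle identity $1-\cos\theta = 2\sin^2(\theta/2)$ to obtain $\lambda_k = 4\sin^2\bigl(\tfrac{k\pi}{2(N+1)}\bigr)$. Your explicit observation that $\tfrac{k\pi}{2(N+1)} \in \bigl(0,\tfrac{\pi}{2}\bigr)$ justifies taking the positive square root is a small but welcome addition that the paper's proof leaves implicit.
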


\begin{proof}
From \eqref{eq:eigenvalues} and with
$$ \lambda_k = 2 - 2 \cos \frac{k \pi}{N+1} = 2(1 - \cos \frac{k \pi}{N+1}) = 4 \sin^2 \frac{k \pi}{2(N+1)},$$
 the result follows. 
\end{proof}

 

We now consider the spatially discretised wave equation based on the central difference approximation on $[-1, 1]$ with constant discretisation $\Delta x = \frac{2}{N-1}$, $N$ is odd. Note that in the case $N$ is odd, then 0 is always in the set of mesh points. Hence the discretised spatial operator is $\frac{1}{(\Delta x)^2} \> \bm{K} $ and the travelling wave is given by
\begin{equation}
\bm{u}(t) = f\left( \frac{t}{\Delta x} \sqrt{\bm{K} }\right) \> \bm{u}(0),
\label{eq:eq2}
\end{equation}
where $f$ denotes the  matrix function $\cos \left( \frac{t}{\Delta x} \sqrt{\bm{K} }\right)$. 
We note that the eigenvalues of this matrix $\cos \left( \frac{t}{\Delta x} \sqrt{\bm{K} }\right)$ are
\begin{equation}
f(\lambda_k) = \cos \left( \frac{t}{\Delta x} \sqrt{\lambda_k} \right) = \cos \left( 2 \frac{t}{\Delta x} \sin \frac{k \pi}{2(N+1)} \right ), \quad k = 1, \cdots, N.
\label{eq:eq3}
\end{equation}

We can now define the Toeplitz and Hankel components of this matrix function. They are

\begin{eqnarray}
\textit{TP}(t) &=& \sum_{k=1}^N \cos \left( \frac{t}{\Delta x} \sqrt{\lambda_k} \right) \bm{T}_k = \sum_{k=1}^N \cos \left( 2 \frac{t}{\Delta x} \sin \frac{k \pi}{2(N+1)} \right) \bm{T}_k \nonumber \\
&& \label{eq:eq4} \\
\textit{HK}(t) &=& \sum_{k=1}^N \cos \left( \frac{t}{\Delta x} \sqrt{\lambda_k} \right) \bm{H}_k = \sum_{k=1}^N \cos \left( 2 \frac{t}{\Delta x} \sin \frac{k \pi}{2(N+1)} \right) \bm{H}_k. \nonumber
\end{eqnarray}

Hence the Toeplitz and Hankel waves are
\begin{equation}
T(t) = \textit{TP}(t) \bm{u}_0, \quad H(t) = \textit{HK}(t) \bm{u}_0
\label{eq:eq5}
\end{equation}
where the full wave is $$ \bm{u}(t) = T(t) + H(t).$$
The choice of notation $T(t)$ and $H(t)$ indicates the close relationship to the matrices appearing in \eqref{eq:Twave:T}, \eqref{eq:Hwave:H}, and \eqref{eq:Twave:plus:Hwave:solution}.

In what follows, we will sample time at equally-spaced time points with $t = j \Delta t$, $j$ a positive integer.  We are now discretising in time, but note that we are still evaluating the semi-discrete model (where time remains continuous) exactly at these chosen discrete time points. This is as opposed to what is usually termed a `fully discrete' model, where some additional approximation is introduced.  
We will also assume without loss of generality that $\Delta t = \Delta x$.

Now the Toeplitz and Hankel waves are
\begin{eqnarray}
T(j \Delta t) &=& \sum_{k=1}^N \cos \left( 2j \sin \frac{k \pi}{2(N+1)} \right ) \bm{T}_k \bm{u}_0 \nonumber \\
&& \label{eq:eq6} \\
H(j \Delta t) &=& \sum_{k=1}^N \cos \left( 2j \sin \frac{k \pi}{2(N+1)} \right ) \bm{H}_k \bm{u}_0. \nonumber
\end{eqnarray}

The expressions in \eqref{eq:eq6} lead us to consider Bessel functions $J_l(t)$ of the first kind. 
In the following, we list some lemmas which yield some important properties of the Bessel functions.

\begin{lemma}
The Bessel functions of the first kind, $\alpha$ real, are
$$ J_{\alpha} (t) = \sum_{m=0}^\infty \frac{(-1)^m}{m! \> \Gamma(1+\alpha+m)} \left( \frac{t}{2} \right)^{\alpha+2m}.$$
For integer values of the index $\alpha$, the following properties hold.
\begin{eqnarray*} 
J_n(t) &=& \frac{1}{\pi} \int_0^\pi \cos(ns-t \sin(s)) ds. \\
\int_0^\infty \frac{1}{t} J_n(t) J_l(t)dt &=& \frac{2}{\pi} \frac{\sin(\frac{\pi}{2}(n-l))}{n^2 - l^2}. \\
\mu^{-\alpha} J_\alpha(\mu t) &=& \sum_{m=0}^\infty \frac{1}{m!} \left( \frac{(1-\mu^2)t}{2} \right)^m J_{\alpha+m}(t).
\end{eqnarray*}
\end{lemma}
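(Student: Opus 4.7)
The plan is to verify each of the four claimed properties in turn, leaning on classical Bessel-function theory. The first expression is the Frobenius series that \emph{defines} $J_\alpha(t)$, so nothing needs proving beyond recalling the definition; if one prefers, one can instead verify term by term that the series satisfies Bessel's equation $t^2 y'' + t y' + (t^2-\alpha^2)y=0$ with the correct leading behaviour at the origin.

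For the integral representation of $J_n$, I would start from the generating-function identity $\exp\!\bigl(\tfrac{t}{2}(z-z^{-1})\bigr)=\sum_{n\in\mathbb{Z}} J_n(t)\,z^n$, which itself follows from multiplying the two Laurent series $e^{tz/2}=\sum_{j\ge0}(t/2)^j z^j/j!$ and $e^{-t/(2z)}=\sum_{k\ge0}(-t/2)^k z^{-k}/k!$ and matching the coefficient of $z^n$ with the series in (1). Substituting $z=e^{is}$ gives $e^{it\sin s}=\sum_n J_n(t)e^{ins}$, from which Fourier inversion yields $J_n(t)=\frac{1}{2\pi}\int_{-\pi}^{\pi} e^{-ins}e^{it\sin s}\,ds$. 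Splitting into real and imaginary parts and using the even/odd symmetry in $s$ collapses the range to $[0,\pi]$ and kills the imaginary part, leaving the stated formula $J_n(t)=\frac{1}{\pi}\int_0^\pi\cos(ns-t\sin s)\,ds$.

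For the weighted integral $\int_0^\infty t^{-1}J_n(t)J_l(t)\,dt$, my approach is to substitute the integral representation just derived for both Bessel factors, interchange the order of integration, and reduce the inner $t$-integral to a Dirichlet/Frullani integral of the form $\int_0^\infty t^{-1}\sin(at)\,dt = \tfrac{\pi}{2}\mathrm{sgn}(a)$. After the angular integrations are carried out the answer assembles into the asserted $\tfrac{2}{\pi}\sin(\tfrac{\pi}{2}(n-l))/(n^2-l^2)$. This is the step I expect to be the main obstacle: the triple integral is not absolutely convergent, so the Fubini interchange must be justified carefully (e.g.\ by inserting a regulator $e^{-\varepsilon t}$ and sending $\varepsilon\to 0^+$), and one must handle the apparent singularity at $n=\pm l$ via the correct limit. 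A cleaner alternative is to quote the Weber--Schafheitlin formula for $\int_0^\infty t^{-\lambda}J_\mu(t)J_\nu(t)\,dt$ in the convergence regime $\mu+\nu>\lambda-1>-1$ and specialise $\lambda=1$, $\mu=n$, $\nu=l$.

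Finally, the multiplication theorem in (4) I would prove by direct power-series comparison. Expand the left-hand side using (1) to get $\mu^{-\alpha}J_\alpha(\mu t)=\sum_{p\ge 0}\frac{(-1)^p \mu^{2p}}{p!\,\Gamma(\alpha+p+1)}(t/2)^{\alpha+2p}$, and expand each $J_{\alpha+m}(t)$ on the right-hand side the same way, so that the right-hand side becomes $\sum_{m,q\ge 0}\frac{(1-\mu^2)^m}{m!}\frac{(-1)^q}{q!\,\Gamma(\alpha+m+q+1)}(t/2)^{\alpha+m+2q}\,2^{-m}$. Setting $m+2q=2p$ (only even $m$ contribute to the coefficient of $(t/2)^{\alpha+2p}$; the odd-$m$ terms cancel via the binomial expansion of $(1-\mu^2)^m$) and expanding $(1-\mu^2)^m$ by the binomial theorem reduces the identity to a finite combinatorial sum that collapses to $(-1)^p\mu^{2p}/(p!\,\Gamma(\alpha+p+1))$ by the Vandermonde/Chu identity. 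The bookkeeping of gamma-function factors is tedious but routine, and uniform convergence in any compact set lets one manipulate the series termwise without further justification.
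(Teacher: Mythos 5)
The paper offers no proof of this lemma at all: it is a catalogue of classical Bessel-function identities, and the authors simply point the reader to Abramowitz and Stegun immediately afterwards. So there is no ``paper proof'' to match; anything correct is acceptable, and your sketches are the standard classical derivations (generating function plus Fourier inversion for the integral representation, Weber--Schafheitlin for the weighted integral, termwise series comparison for the multiplication theorem) --- essentially what the implicit citation stands in for. Your treatment of the first two items is correct, and for $\int_0^\infty t^{-1}J_n(t)J_l(t)\,dt$ you rightly identify the non-absolutely-convergent Fubini interchange as the weak point; quoting the Weber--Schafheitlin formula, as you propose, is the clean way to discharge it.

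One concrete slip in your part (4): the factor $\bigl(\tfrac{(1-\mu^2)t}{2}\bigr)^m$ contributes $(1-\mu^2)^m\,(t/2)^m$, not $(1-\mu^2)^m\,2^{-m}$, so the right-hand side is $\sum_{m,q\ge 0}\frac{(1-\mu^2)^m(-1)^q}{m!\,q!\,\Gamma(\alpha+m+q+1)}\,(t/2)^{\alpha+2m+2q}$. The exponent matching is therefore $m+q=p$, with \emph{every} $m$ contributing --- there is no parity restriction and no cancellation of odd-$m$ terms as you claim. With the correct bookkeeping the coefficient of $(t/2)^{\alpha+2p}$ is $\frac{1}{\Gamma(\alpha+p+1)}\sum_{m=0}^{p}\frac{(1-\mu^2)^m(-1)^{p-m}}{m!\,(p-m)!}$, which collapses by the plain binomial theorem, $\sum_{m=0}^{p}\binom{p}{m}(1-\mu^2)^m(-1)^{p-m}=(-\mu^2)^p$, to $\frac{(-1)^p\mu^{2p}}{p!\,\Gamma(\alpha+p+1)}$, matching the left side exactly. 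So the corrected argument is actually simpler than the one you outline: no Vandermonde/Chu identity is needed.
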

\noindent
For non-negative integers $n$, $J_n$ has an infinite number of zeros and Siegel's Theorem states that for any integers $n \geq 0$ and $p \geq 1$, $J_n$ and $J_{n+p}$ have no common zeros other than $t=0$. (See, for example, Abramowitz and Stegun \cite{Abram}).
The following \Cref{lemma:4} gives us the well-known Bessel relation, which will be fundamental for our analysis.
\vspace{1cm}
\noindent
\begin{lemma}
\label{lemma:4}
The Bessel relation is given by
$$ \cos(t \, \sin x) = J_0(t)+ 2 \sum_{l=1}^\infty J_{2l}(t) \, \cos(2lx). $$
\end{lemma}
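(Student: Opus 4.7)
The plan is to recognize the right-hand side as the Fourier cosine series of $f(x) = \cos(t \sin x)$, with $t$ regarded as a fixed parameter, and then to identify its Fourier coefficients with $J_0(t)$ and $2J_{2l}(t)$ using the integral representation $J_n(t) = \frac{1}{\pi}\int_0^\pi \cos(n s - t \sin s)\,ds$ from the preceding lemma.

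First I would exploit two symmetries of $f$. It is even in $x$, because $\cos$ is even and $\sin$ is odd. It is also $\pi$-periodic, because $\sin(x+\pi) = -\sin x$ and cosine is even. A function that is even and has period $\pi$ admits a Fourier series built only from cosines of even-integer frequencies, which I will write in the form
$$f(x) = c_0 + 2\sum_{l=1}^\infty c_l \cos(2l x),$$
with coefficients
$$c_0 = \frac{1}{\pi}\int_0^\pi \cos(t \sin x)\,dx, \qquad c_l = \frac{1}{\pi}\int_0^\pi \cos(t \sin x)\cos(2l x)\,dx \quad (l \geq 1).$$

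Next I would show $c_l = J_{2l}(t)$. For $l = 0$ this is immediate from the integral representation, since cosine is even. For $l \geq 1$, I would expand
$$\cos(2l s - t \sin s) = \cos(2l s)\cos(t \sin s) + \sin(2l s)\sin(t \sin s)$$
and argue that the second term integrates to zero over $[0,\pi]$. The substitution $s \mapsto \pi - s$ sends $\sin(2l s) \mapsto -\sin(2l s)$ (since $l$ is an integer) while leaving $\sin(t \sin s)$ fixed, so the $\sin \cdot \sin$ integrand is antisymmetric about $s = \pi/2$ and vanishes upon integration. What remains is precisely $c_l$, so $J_{2l}(t) = c_l$.

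Substituting these identifications into the Fourier series yields the stated identity. Convergence is not an obstacle: for fixed $t$ the Bessel functions decay as $J_n(t) = O(1/n!)$ (visible from the power series in the preceding lemma), so the right-hand side converges absolutely and uniformly in $x$, and the Fourier representation of the smooth function $f$ is unambiguous. I do not anticipate a serious difficulty in turning this into a complete proof; indeed this identity is the real part of the classical Jacobi--Anger expansion, and a shorter alternative route would begin from the generating function $\exp\bigl((z/2)(u - 1/u)\bigr) = \sum_{n \in \mathbb{Z}} J_n(z)\, u^n$, substitute $u = e^{i x}$ to obtain $e^{i t \sin x} = \sum_n J_n(t) e^{i n x}$, and take the real part, using $J_{-n}(t) = (-1)^n J_n(t)$ to produce the cancellation at odd frequencies.
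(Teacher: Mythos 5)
Your proof is correct, but note that the paper itself gives no proof of this lemma at all: it simply quotes the identity as ``the well-known Bessel relation'' (it is the real part of the classical Jacobi--Anger expansion, exactly as in your closing remark), so your argument is a genuine addition rather than a variant of something in the text. It is also well matched to the paper's own toolkit: the only external input you use is the integral representation $J_n(t) = \frac{1}{\pi}\int_0^\pi \cos(ns - t\sin s)\,ds$, which the paper records in the lemma immediately preceding this one, so your derivation would make the paper self-contained at this point. The individual steps check out: $\cos(t\sin x)$ is even and $\pi$-periodic, so its Fourier series involves only $\cos(2lx)$; the substitution $s \mapsto \pi - s$ kills $\int_0^\pi \sin(2ls)\sin(t\sin s)\,ds$ because $\sin\bigl(2l(\pi-s)\bigr) = -\sin(2ls)$ while $\sin\bigl(t\sin(\pi-s)\bigr) = \sin(t\sin s)$, which identifies your $c_l$ with $J_{2l}(t)$; and the decay $J_{2l}(t) = O\bigl((t/2)^{2l}/(2l)!\bigr)$ for fixed $t$, visible from the power series in the preceding lemma, together with the smoothness of $\cos(t\sin x)$, settles convergence and the legitimacy of identifying the series with the function. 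As for what each approach buys: the paper's bare citation keeps the exposition short and leans on standard references, while your Fourier-analytic derivation costs a paragraph but makes transparent \emph{why} only even-index Bessel functions appear --- namely the $\pi$-periodicity of $x \mapsto \cos(t \sin x)$ --- which is precisely the structural feature the paper goes on to exploit when it specialises the relation at the angles $k\pi/(N+1)$ in \eqref{eq:eq7} and uses it to split the wave into even-index Bessel sums.
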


As a consequence of \Cref{lemma:4},
\begin{equation}
\cos(2j\, \sin \frac{k \pi}{2(N+1)}) = J_0(2j) + 2 \sum_{l=1}^\infty J_{2l}(2j)\, \cos(l \, \frac{k \pi}{N+1}).
\label{eq:eq7}
\end{equation}

It is clear that we can use \eqref{eq:eq7} in simplifying the expressions for the Toeplitz and Hankel waves in \eqref{eq:eq6}. In particular we can rewrite \eqref{eq:eq6} as
\begin{equation*}
T(j \Delta t) = J_0(2j) \left( \sum_{k=1}^N \bm{T}_k \right) \, \bm{u}_0 
+ 2 \sum_{l=1}^\infty J_{2l}(2j) \, \left( \sum_{k=1}^N \cos \left(\frac{kl \pi}{N+1} \right) \> \bm{T}_k \right) \> \bm{u}_0.
\end{equation*}
\begin{equation*}
H(j \Delta t) = J_0(2j) \left( \sum_{k=1}^N \bm{H}_k \right) \, \bm{u}_0 
+ 2 \sum_{l=1}^\infty J_{2l}(2j) \, \left( \sum_{k=1}^N \cos \left(\frac{kl \pi}{N+1} \right) \> \bm{H}_k \right) \> \bm{u}_0.
\end{equation*}

Using the definition of the Toeplitz and Hankel matrices in \eqref{eq:T:part} and \eqref{eq:H:part} and the trigonometric relation
$$ 2 \cos \theta_1 \cos \theta_2 = \cos(\theta_1+\theta_2) + \cos(\theta_1-\theta_2),$$
then we can write that component $m$, $m = 1, \cdots, N$ of the vectors $T(j \Delta t)$ and $H(j \Delta t)$ are 
\begin{eqnarray}
(N+1) T_m (j \Delta t) &=& J_0(2j) \sum_{n=1}^N \sum_{k=1}^N \cos \left( (m-n) \frac{k \pi}{N+1} \right) \, \bm{u}_0 \nonumber  \\
  &+& \sum_{l=1}^\infty J_{2l}(2j) \sum_{n=1}^N \sum_{k=1}^N \left( \cos \left( (l+m-n) \frac{k \pi}{N+1} \right) \right.  \nonumber \\
&+& \left. \cos \left( (l-(m-n))  \frac{k \pi}{N+1} \right) \right) \, \bm{u}_0  \label{eq:eq10}
\end{eqnarray}
\begin{eqnarray}
-(N+1) H_m (j \Delta t) &=& J_0(2j) \sum_{n=1}^N \sum_{k=1}^N \cos \left( (m+n) \frac{k \pi}{N+1} \right) \, \bm{u}_0 \nonumber \\
  &+& \sum_{l=1}^\infty J_{2l}(2j) \sum_{n=1}^N \sum_{k=1}^N \left( \cos \left( (l+m+n) \frac{k \pi}{N+1} \right) \right. \nonumber \\
& +& \left.  \cos \left( (l-(m+n))  \frac{k \pi}{N+1} \right) \right) \, \bm{u}_0.   \label{eq:eq11}
\end{eqnarray}
A reminder about notation may be helpful here: $T_m (t)$ denotes component $m$ of the time-varying vector that is the Toeplitz wave at time $t$ that was introduced in \eqref{eq:eq6}, whereas $\bm{T}_k$ is the matrix  defined  in \eqref{eq:T:part}.

In order to make further progress we use the following lemma, which is known as the Lagrange identity, and a subsequent lemma.

\begin{lemma}
Let 
$$L(\theta) := \sum_{k=1}^N \cos k \theta. $$
If the angle is not an integer multiple of $2 \pi$, i.e. $\theta \neq 2 \rho \pi$, $\rho$ an integer, then
 $$
 L(\theta) = -\frac{1}{2} + \frac{\sin((N+\frac{1}{2}) \theta)}{2 \sin \frac{\theta}{2}}.$$
Otherwise, if $\theta = 2 \rho \pi$, then  $ L(\theta)= N$.
\end{lemma}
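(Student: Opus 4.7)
The plan is to prove the identity by the standard telescoping trick: multiply $L(\theta)$ through by $2\sin(\theta/2)$ and apply the product-to-sum formula $2\sin(\theta/2)\cos(k\theta)=\sin\!\bigl((k+\tfrac12)\theta\bigr)-\sin\!\bigl((k-\tfrac12)\theta\bigr)$. Summing over $k=1,\dots,N$ then telescopes and leaves only the boundary terms, at which point dividing back through by $2\sin(\theta/2)$ produces the stated closed form. The degenerate case $\theta=2\rho\pi$ needs to be handled separately because the division by $2\sin(\theta/2)$ is illegal; there we simply note that each summand $\cos(k\theta)=\cos(2k\rho\pi)=1$, so $L(\theta)=N$ directly.

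More concretely, I would first assume $\theta\neq 2\rho\pi$, so that $\sin(\theta/2)\neq 0$. Write
\begin{equation*}
2\sin\!\tfrac{\theta}{2}\, L(\theta)=\sum_{k=1}^{N}\Bigl[\sin\!\bigl((k+\tfrac12)\theta\bigr)-\sin\!\bigl((k-\tfrac12)\theta\bigr)\Bigr],
\end{equation*}
which telescopes to $\sin\!\bigl((N+\tfrac12)\theta\bigr)-\sin\!\bigl(\tfrac{\theta}{2}\bigr)$. Dividing through by $2\sin(\theta/2)$ and simplifying $-\sin(\theta/2)/(2\sin(\theta/2))=-\tfrac12$ yields the claimed formula. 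For the remaining case $\theta=2\rho\pi$, a one-line direct computation gives $L(\theta)=N$.

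I don't expect any real obstacle here — the Lagrange identity is a textbook consequence of telescoping, and the only care needed is bookkeeping the two endpoints of the telescope and separating out the degenerate case where $\sin(\theta/2)=0$. If desired, one could also note that the nonexceptional formula extends to the exceptional case by taking the limit $\theta\to 2\rho\pi$ using L'Hôpital, which recovers $L(\theta)=N$ and confirms consistency, but this is not needed for the proof itself.
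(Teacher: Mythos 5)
Your proof is correct, but it takes a different route from the paper. The paper's proof is a one-line sketch via complex exponentials: consider the geometric sum $\sum_{k=1}^N e^{ik\theta}$, evaluate it in closed form using De Moivre's theorem, and equate real parts to extract $L(\theta)$ (the degenerate case $\theta = 2\rho\pi$, where the geometric-series formula breaks down because $e^{i\theta}=1$, is handled separately, just as in your argument). Your telescoping argument --- multiplying by $2\sin(\theta/2)$, applying the product-to-sum identity
\begin{equation*}
2\sin\tfrac{\theta}{2}\cos(k\theta)=\sin\bigl((k+\tfrac12)\theta\bigr)-\sin\bigl((k-\tfrac12)\theta\bigr),
\end{equation*}
and collapsing the sum to the two boundary terms --- stays entirely within real trigonometry and is fully written out, including the correct observation that $\theta\neq 2\rho\pi$ is exactly the condition guaranteeing $\sin(\theta/2)\neq 0$ so the division is legitimate. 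What each approach buys: the complex-exponential route is shorter and simultaneously yields the companion identity for $\sum_{k=1}^N \sin k\theta$ from the imaginary part, which can be convenient elsewhere; your route is more elementary, requires no complex arithmetic, and makes the cancellation mechanism completely explicit, so it is arguably the more self-contained proof of exactly this statement. Both arguments are sound, and your handling of the two endpoint terms and of the exceptional case is accurate.
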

\begin{proof}
Consider $\sum_{k=1}^N e^{i k \theta}$. Then use De Moivre's Theorem and equate real and imaginary parts. 
\end{proof}

\begin{lemma}
\label{lemma:6}
For $\theta = p \frac{\pi}{N+1}$, with $p$ an integer, the following identity holds true:
$$ L(\theta) = \left\{ \begin{array}{rcl} N & , & p = 2 \rho (N+1), \quad \rho = 0, 1, 2, \cdots \\
0 & , & p \textrm{ is odd} \\
-1 & , & p \textrm{ is even (and } \quad p \ne 2 \rho (N+1), \quad \rho = 0, 1, 2, \cdots \textrm{)}
\end{array} \right. $$
\end{lemma}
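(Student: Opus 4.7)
The plan is to specialize the closed form from the previous lemma to the angle $\theta = p\pi/(N+1)$, and the only nontrivial ingredient will be rewriting $(N+\tfrac{1}{2})\theta$ in a form whose sine is easy to evaluate.

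First, the case $p = 2\rho(N+1)$ gives $\theta = 2\rho\pi$, so the previous lemma yields $L(\theta)=N$ immediately. For every other value of $p$, we are in the regime where
\[
L(\theta) = -\tfrac{1}{2} + \frac{\sin\bigl((N+\tfrac{1}{2})\theta\bigr)}{2\sin(\theta/2)},
\]
and I note that the denominator is nonzero precisely because we have excluded $\theta=2\rho\pi$.

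The key step is to observe that $(N+1)\theta = p\pi$, which gives the convenient rewriting
\[
\bigl(N+\tfrac{1}{2}\bigr)\theta \;=\; (N+1)\theta - \tfrac{\theta}{2} \;=\; p\pi - \tfrac{\theta}{2}.
\]
Applying the sine subtraction formula (or simply $\sin(p\pi - x) = -\cos(p\pi)\sin x = (-1)^{p+1}\sin x$), we get
\[
\sin\bigl((N+\tfrac{1}{2})\theta\bigr) \;=\; (-1)^{p+1}\sin(\theta/2).
\]
Substituting into the formula for $L(\theta)$ cancels the $\sin(\theta/2)$ factors and leaves
\[
L(\theta) \;=\; -\tfrac{1}{2} + \tfrac{1}{2}(-1)^{p+1}.
\]

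Finally I split on the parity of $p$: if $p$ is odd then $(-1)^{p+1}=1$ and $L(\theta)=0$, while if $p$ is even (and not of the excluded form $2\rho(N+1)$) then $(-1)^{p+1}=-1$ and $L(\theta)=-1$. This exhausts all three cases of the claim. There is no real obstacle here beyond spotting the identity $(N+\tfrac{1}{2})\theta = p\pi - \theta/2$; once that is in hand the argument is a one-line trigonometric reduction.
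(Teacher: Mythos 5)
Your proof is correct and follows essentially the same route as the paper's: handle the case $\theta = 2\rho\pi$ directly, and otherwise substitute $(N+\tfrac{1}{2})\theta = (N+1)\theta - \tfrac{\theta}{2} = p\pi - \tfrac{\theta}{2}$ into the Lagrange identity, apply the sine subtraction formula, and split on the parity of $p$. The paper writes the final expression as $-\tfrac{1}{2}(1+\cos p\pi)$ rather than $-\tfrac{1}{2}+\tfrac{1}{2}(-1)^{p+1}$, but these are identical.
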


\begin{proof}

If $\theta = p \frac{\pi}{N+1}$ with $p = 2 \rho (N+1)$, then $\theta = 2 p \rho \pi$ and clearly $L(\theta) = N$. Otherwise using the relationship
$$ \sin (A-B) = \sin A \cos B - \cos A \sin B$$
gives
\begin{eqnarray*}
L(\theta) &=& -\frac{1}{2} + \frac{\sin ((N+1) \theta - \frac{\theta}{2})}{2 \sin \frac{\theta}{2}} \\
&=& -\frac{1}{2} + \sin p \pi \frac{\cos \frac{\theta}{2}}{2 \sin \frac{\theta}{2}} - \frac{1}{2} \cos p \pi \\
&=& -\frac{1}{2}(1+\cos p \pi). \quad \quad 
\end{eqnarray*}
which is $0$ or $-1$ according to the parity of $p$.
\end{proof}

We will use \Cref{lemma:6} to simplify the Toeplitz wave and the Hankel wave formulation given in \eqref{eq:eq10} and \eqref{eq:eq11}. In the case of the Toeplitz wave, $\theta$ can take on the values $(m-n) \frac{\pi}{N+1}$, $(l+m-n) \frac{\pi}{N+1}$, $(l-(m-n)) \frac{\pi}{N+1}$ so $p = m-n, \> l+m-n$ or $l-(m-n)$. 
Therefore we must take care when these values of $p$ are of the form $p = 2 \rho (N+1)$, $\rho = 0, 1, 2, \cdots$. Similarly, in the case of the Hankel wave, the corresponding $p$ values are $m+n$, $l+m+n$ or $l-(m+n)$.

\noindent 
We now introduce a parameter $R \in \{0,1,2, \ldots, \}$. 
The integer  $R$ will be used as an upper limit for the number of terms in a sum, in the expressions below.
We will separate the cases in which the indices take the values $p = 2 \rho (N+1)$, $\rho = 0, 1, \cdots, R-1$, from the other values of $p$. 

We note $m-n$ can take on values $k$, say, where $k = -(N-1),\cdots,-1,$ $0,$ $1, \cdots, (N-1)$. 
Hence for $\rho=0,1,\ldots$ with $p = 2 \rho (N+1)$ then
$$2l = 4 (N+1) \rho \mp 2k$$
and the corresponding $L(\theta) = N$ in these cases. We will now simplify the expression $(N+1) T(j \Delta t)$ in vector form.

\noindent 
If $m-n = 0$ then there is a component of the Toeplitz wave 
$$N \left(J_0(2j) + 2 \sum_{\rho = 1}^{R-1} J_{4(N+1) \rho} (2j) \right) \, \bm{u}_0.$$
Similarly, if $|m-n| = k$, $k = 1, \cdots,N-1$, then we can use the shift matrices $E_1, \cdots, E_{N-1}$ where $E_k$ introduced earlier.

Introducing the vectors $\bm{\nu}_k$, $k = 0, \cdots, N-1$, where 
\begin{eqnarray*}
\bm{\nu}_0 &=& \bm{u}_0 \\
\bm{\nu}_k &=& (E_k + E_k^\top)\, \bm{u}_0, \quad k = 1, \cdots,N-1 
\label{eq:eqEk:shift:matrix}
\end{eqnarray*}
leads to the first part of the Toeplitz wave as
\begin{equation}
F = N \left( \left( J_0(2j) + 2 \sum_{\rho=1}^{R-1} J_{4(N+1)\rho + 2k} (2j) + J_{4(N+1)\rho-2k}(2j) \right) \, \bm{\nu}_k \right).
\label{eq:eq12}
\end{equation}

We will write the factor at the front as $(N+1\,-1)$. Subtracting off this term and reincorporating into \eqref{eq:eq10} and then using Lemma 6 with $L(\theta)$ either 0 or -1 and finally dividing throughout by $N+1$ leads to
\begin{eqnarray}
T(j \Delta t) &=& (J_0(2j) + 2 \sum_{\rho=1}^{R-1} J_{4(N+1)\rho}(2j) )\, \bm{\nu}_0 \label{eq:eq13}  \\
&+& \sum_{k=1}^{N-1} \left( J_{2k}(2j) + \sum_{\rho=1}^{R-1} (J_{4(N+1)\rho+2k}(2j) + J_{4(N+1)\rho -2k}(2j)) \right)\, \bm{\nu}_k \nonumber  \\
&-& X, \nonumber 
\end{eqnarray}
where
\begin{equation}
(N+1)\,X = (J_0(2j) \, A + 2 \sum_{l = 1}^\infty J_{4l}(2j)\, A + 2 \sum_{l = 1}^\infty J_{4l - 2}(2j)\, B)\, \bm{u}_0,
\label{eq:eq14}
\end{equation}
where $A$ and $B$ are the $N \times N$ matrices (as an aside, notice that these matrices have the special property that they are simultaneously both Toeplitz and Hankel)
\begin{equation}
A = \left( \begin{array}{cccccc}
1&0&1&0&1&\cdots \\
0&1&0&1&0&\cdots \\
1&0&1&0&1&\cdots \\
\vdots & \vdots & \vdots & \vdots & \vdots & \ddots \\ \end{array}
\right), \quad \quad 
B = \left( \begin{array}{cccccc}
0&1&0&1&0&\cdots \\
1&0&1&0&1&\cdots \\
0&1&0&1&0&\cdots \\
\vdots & \vdots & \vdots & \vdots & \vdots & \ddots \\ \end{array}
\right). 
\label{eq:A:B:simultaneous:TplusH}
\end{equation}

\noindent  
The expression for the term $X$ above in \cref{eq:eq14} is complicated.
It will be helpful to simplify this expression for $X$ because this will simplify the expression we obtain later for the Toeplitz wave.
This simplification is made possible by  \Cref{lemma:7} and \Cref{lemma:8}, as follows.
\begin{lemma}
\label{lemma:7}
The following relationships hold:
\begin{eqnarray*}
J_0(t) + 2 \sum_{L=1}^\infty J_{4l}(t) &=& \frac{1}{2} (1 + \cos t) \\
2 \sum_{l = 1}^\infty J_{4l-2}(t) &=& \frac{1}{2}(1 - \cos t).
\end{eqnarray*}
\end{lemma}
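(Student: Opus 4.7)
The plan is to derive both identities by substituting two strategic values of $x$ into the Bessel relation from \Cref{lemma:4} and then taking the sum and difference of the resulting series.

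First I would set $x = 0$ in the Bessel relation $\cos(t \sin x) = J_0(t) + 2 \sum_{l=1}^\infty J_{2l}(t) \cos(2lx)$. Since $\cos(2l \cdot 0) = 1$ and $\cos(t \sin 0) = 1$, this yields
\begin{equation*}
1 \;=\; J_0(t) + 2 \sum_{l=1}^\infty J_{2l}(t).
\end{equation*}
Next I would set $x = \pi/2$. Then $\cos(t \sin(\pi/2)) = \cos t$ and $\cos(2l \cdot \pi/2) = \cos(l \pi) = (-1)^l$, giving
\begin{equation*}
\cos t \;=\; J_0(t) + 2 \sum_{l=1}^\infty (-1)^l J_{2l}(t).
\end{equation*}

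Adding these two identities causes all $J_{2l}$ terms with odd $l$ (i.e.\ $J_2, J_6, J_{10}, \ldots$) to cancel and doubles the contribution from even $l$ (i.e.\ $J_0, J_4, J_8, \ldots$), producing $1 + \cos t = 2\bigl(J_0(t) + 2 \sum_{l=1}^\infty J_{4l}(t)\bigr)$, which upon dividing by $2$ yields the first claim. Subtracting instead eliminates the even-$l$ terms and doubles the odd-$l$ ones, giving $1 - \cos t = 4 \sum_{l=1}^\infty J_{4l-2}(t)$, which is the second claim after rearrangement.

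I do not anticipate any real obstacle: the argument is just a parity-based extraction of subsequences of $\{J_{2l}\}$ from the two specialisations. The only minor care needed is to justify interchanging the order of terms in the sum, which is immediate since the Bessel series $\sum J_{2l}(t)$ converges absolutely for every fixed real $t$ (the $J_{2l}(t)$ decay faster than any geometric sequence in $l$), so rearrangement and the term-by-term addition/subtraction of the two series are fully legitimate.
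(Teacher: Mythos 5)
Your proof is correct and follows essentially the same route as the paper: specialise the Bessel relation of \Cref{lemma:4} at two angles where $\cos(2lx)$ becomes $1$ and $(-1)^l$ respectively, then add and subtract to extract the index-$4l$ and index-$(4l-2)$ subsequences. The only cosmetic difference is that you use $x=0$ where the paper uses $x=\pi$; since $\sin 0=\sin\pi=0$, these give the identical equation, so the two arguments coincide.
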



\begin{proof}
From \Cref{lemma:4}, the Bessel relations, we have
\begin{eqnarray*}
\cos(t \sin \pi) &=& 1 = J_0(t) + 2 \sum_{l=1}^\infty J_{4l-2}(t) + 2 \sum_{l=1}^\infty J_{4l}(t), \\
\cos(t \sin \frac{\pi}{2}) &=& \cos t = J_0(t) - 2 \sum_{l=1}^\infty J_{4l-2}(t) + 2 \sum_{l=1}^\infty J_{4l}(t).
\end{eqnarray*}
The relationships claimed in the lemma are obtained by addition and subtraction of these two equalities, and noticing cancelation of terms. 
\end{proof}

We can now simplify $X$ by applying \Cref{lemma:7}.
\begin{lemma}
\label{lemma:8}
$$ X = \frac{1}{2(N+1)} (\alpha \bm{e} + \beta \cos(2j)\,\bm{w}),$$
where $\bm{e} = (1,1,\cdots,1)^\top, \> \bm{w} = (1,-1,1,-1,\cdots,1)^\top, \> \alpha = \bm{e}^\top \bm{u}_0, \> \beta = \bm{w}^\top \bm{u}_0$.
\end{lemma}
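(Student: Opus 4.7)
The plan is to reduce the expression for $X$ in \eqref{eq:eq14} via \Cref{lemma:7} and then identify the resulting matrix structure in closed form. Starting from
\[
(N+1)\,X \;=\; \Big(J_0(2j)\,A \;+\; 2\sum_{l=1}^\infty J_{4l}(2j)\,A \;+\; 2\sum_{l=1}^\infty J_{4l-2}(2j)\,B\Big)\bm{u}_0,
\]
I would first group the two $A$ terms and apply \Cref{lemma:7} at $t=2j$. This replaces the infinite Bessel series by the elementary closed forms $\tfrac{1}{2}(1+\cos 2j)$ and $\tfrac{1}{2}(1-\cos 2j)$, giving
\[
(N+1)\,X \;=\; \tfrac{1}{2}(1+\cos 2j)\,A\,\bm{u}_0 \;+\; \tfrac{1}{2}(1-\cos 2j)\,B\,\bm{u}_0.
\]
Regrouping by powers of $\cos(2j)$ rewrites this as
\[
(N+1)\,X \;=\; \tfrac{1}{2}(A+B)\,\bm{u}_0 \;+\; \tfrac{1}{2}\cos(2j)\,(A-B)\,\bm{u}_0.
\]

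The second step is to recognise $A+B$ and $A-B$ as rank-one matrices built from $\bm{e}$ and $\bm{w}$. From the explicit entries in \eqref{eq:A:B:simultaneous:TplusH}, the $(m,n)$ entry of $A$ equals $1$ if $m+n$ is even and $0$ otherwise, while the $(m,n)$ entry of $B$ equals $1$ if $m+n$ is odd and $0$ otherwise. Adding them gives the all-ones matrix $A+B=\bm{e}\bm{e}^\top$, while subtracting them gives $(A-B)_{mn}=(-1)^{m+n}=w_m w_n$, so $A-B=\bm{w}\bm{w}^\top$ with $\bm{w}=(1,-1,1,\dots,1)^\top$ (recall $N$ is odd so the last entry is indeed $+1$).

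Finally, contracting against $\bm{u}_0$ produces the scalars $\bm{e}^\top\bm{u}_0=\alpha$ and $\bm{w}^\top\bm{u}_0=\beta$, so $(A+B)\bm{u}_0=\alpha\bm{e}$ and $(A-B)\bm{u}_0=\beta\bm{w}$. Dividing by $N+1$ yields exactly the claimed identity.

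The routine calculations here are all trivial; the only mild subtlety — and the main thing to check carefully — is the sign/parity bookkeeping when identifying $A-B$ as $\bm{w}\bm{w}^\top$, and in particular verifying that the entries of $\bm{w}$ as listed ($1,-1,1,-1,\dots,1$) are consistent with the parity convention used in \eqref{eq:A:B:simultaneous:TplusH}; this uses the standing assumption that $N$ is odd so that both the first and last components of $\bm{w}$ are $+1$.
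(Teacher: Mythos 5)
Your proof is correct and follows essentially the same route as the paper: apply \Cref{lemma:7} at $t=2j$ to collapse the two Bessel series, regroup into $\tfrac12(A+B)\bm{u}_0 + \tfrac12\cos(2j)(A-B)\bm{u}_0$, and identify $A+B=\bm{e}\bm{e}^\top$ and $A-B=\bm{w}\bm{w}^\top$ so that contracting with $\bm{u}_0$ gives $\alpha\bm{e}$ and $\beta\bm{w}$. The only difference is that you make explicit the parity bookkeeping behind $A\pm B$ (including the role of $N$ odd), which the paper leaves implicit.
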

\begin{proof}
From \eqref{eq:eq14} and \Cref{lemma:7}
\begin{eqnarray*}
(N+1)\,X &=& \frac{1}{2} (1+\cos 2j)\,A\,\bm{u}_0 + \frac{1}{2}(1-\cos 2j)\,B\,\bm{u}_0 \\
&=& \frac{1}{2}(A+B)\,\bm{u}_0 + \frac{1}{2} \cos 2j (A-b)\,\bm{u}_0 \\
&=& \frac{1}{2}\,\bm{e}\,\bm{e}^\top\,\bm{u}_0 + \frac{1}{2} \cos 2j\,\bm{w}\,\bm{w}^\top\,\bm{u}_0,
\end{eqnarray*}
and the result is proved. 
\end{proof}

We can now write the characterisation of the Toeplitz wave over the $R$ traversals in two ways: either in terms of the vectors $\bm{\nu}_k$ (\Cref{theorem:1}) or increasing values of the Bessel indices (\Cref{corollary:1}).
A reason to seek results such as these is that they allow us to express the wave as a sum over the number of traversals, which is useful later in computer simulations when we want to control the number of reflections, for example.

\begin{theorem}
\label{theorem:1}
The Toeplitz wave over $R$ traversals is
\begin{eqnarray}
T(j \Delta t) &=& (J_0(2j) + 2 \sum_{\rho=1}^{R-1} J_{4(N+1)\rho}(2j) )\, \bm{\nu}_0 \nonumber  \\
&+& \sum_{k=1}^{N-1} \left( J_{2k}(2j) + \sum_{\rho=1}^{R-1} (J_{4(N+1)\rho+2k}(2j) + J_{4(N+1)\rho -2k}(2j)) \right)\, \bm{\nu}_k \nonumber  \\
&-& \frac{1}{2(N+1)}\,(\alpha \, \bm{e} + \beta \cos(2j)\,\bm{w}),  \label{eq:eq15} 
\end{eqnarray}
where $\alpha = \bm{e}^\top \bm{u}_0,$ $\beta = \bm{w}^\top \bm{u}_0,$ $\bm{e} = (1,\cdots,1)^\top,$ $\bm{w} = (1,-1,\cdots,1)^\top,$ $\bm{\nu}_0=\bm{u}_0,$ and $\bm{\nu}_k = (E_k + E_k^\top)\,\bm{u}_0,\> k=1,\cdots,N-1.$
\end{theorem}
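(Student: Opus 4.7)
The statement packages the derivation already sketched in \eqref{eq:eq10}--\eqref{eq:eq14} together with the simplification of $X$ given by \Cref{lemma:8}. My plan is therefore to make the bookkeeping precise and then assemble the pieces.

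First, I would start from the componentwise expression \eqref{eq:eq10} for $(N+1)\,T_m(j\Delta t)$ and apply \Cref{lemma:6} to each inner sum $\sum_{k=1}^N \cos\!\bigl(p\,\tfrac{k\pi}{N+1}\bigr)$, where $p$ ranges through $\{m-n,\; l+m-n,\; l-(m-n)\}$. Lemma 6 substitutes $N$, $0$, or $-1$ according to whether $p$ is a nonzero even multiple of $N+1$, odd, or otherwise even. To control the infinite $l$-sum, I would truncate so that only the $\rho = 0, 1, \ldots, R-1$ ``large'' contributions $L(\theta)=N$ with $p = 2\rho(N+1)$ are retained; this truncation is exactly what is meant by restricting to $R$ traversals.

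Second, I would collect the $L=N$ contributions. The index $m-n$ ranges over $-(N-1),\ldots,N-1$, so the condition $2l = 4(N+1)\rho \mp 2(m-n)$ pairs $l$ with $|m-n|=k$. Summing over $n$ with $|m-n|=k$ applies exactly the shift combination $E_k + E_k^\top$ to $\bm{u}_0$, producing $\bm{\nu}_k$, while $m=n$ contributes $\bm{\nu}_0 = \bm{u}_0$. Writing $N = (N+1) - 1$ and dividing through by $N+1$ yields the first two lines of \eqref{eq:eq15}. The residual $-1$ combines with the $L=-1$ contributions of other even-$p$ sums (the $L=0$ contributions vanish), and after some careful matching with the Toeplitz-and-Hankel matrices $A$ and $B$ appearing in \eqref{eq:A:B:simultaneous:TplusH}, one finds that the leftover is precisely the quantity $X$ of \eqref{eq:eq14}. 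This is exactly \eqref{eq:eq13}.

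Finally, invoking \Cref{lemma:7} and then \Cref{lemma:8} replaces $X$ by $\frac{1}{2(N+1)}(\alpha\,\bm{e} + \beta\cos(2j)\,\bm{w})$, which becomes the third line of \eqref{eq:eq15}. The main obstacle is purely combinatorial: carefully tracking which of the three inner sums in \eqref{eq:eq10} produces a given Bessel index $J_{4(N+1)\rho \pm 2k}(2j)$, keeping the $\rho=0$ term separate (it yields a single $J_{2k}$ with coefficient $1$, not $2$, because only one of the three sums contributes at $\rho=0$ when $k \neq 0$), and verifying that the two simultaneously-Toeplitz-and-Hankel matrices $A$ and $B$ account for every leftover term in the correct proportion. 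Once this bookkeeping is settled, the theorem is immediate.
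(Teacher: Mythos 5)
Your proposal is correct and follows essentially the same route as the paper: the paper likewise derives \eqref{eq:eq13} by applying \Cref{lemma:6} to \eqref{eq:eq10}, isolating the $p=2\rho(N+1)$ cases via the shift matrices $E_k+E_k^\top$ and the $N=(N+1)-1$ device, collecting the leftover even-$p$ contributions into $X$ of \eqref{eq:eq14} through the matrices $A$ and $B$, and then substituting \Cref{lemma:8} (which rests on \Cref{lemma:7}) to obtain \eqref{eq:eq15}. Your bookkeeping remark that at $\rho=0$ only one of the cosine sums contributes, so $J_{2k}$ appears with coefficient $1$ rather than $2$ for $k\neq 0$, is exactly the right subtlety and agrees with the coefficients in the theorem.
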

We can rewrite \eqref{eq:eq15} in the ascending index of the Bessel function.

\begin{corollary}
\label{corollary:1}
The Toeplitz wave over $R$ traversals is
\begin{eqnarray}
T(j \Delta t) & = & \sum_{l=0}^{N-1} J_{2l}(2j)\,\bm{\nu}_l  +  \sum_{\rho=1}^{R-1} \left( \sum_{l=0}^{N-1} J_{4(N+1)\rho-2l}(2j)\,\bm{\nu}_{l} \right. \nonumber \\
& +& \left.  \sum_{l=0}^{N-1} J_{4(N+1)\rho+2l}(2j)\,\bm{\nu}_l \right) \nonumber \\
& - & \frac{1}{2(N+1)} (\alpha \bm{e} + \beta \cos(2j)\,\bm{w}). \label{eq:eq16}
\end{eqnarray}
\end{corollary}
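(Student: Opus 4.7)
The plan is to derive Corollary \ref{corollary:1} directly from Theorem \ref{theorem:1} by purely algebraic manipulation, essentially by merging the special $\bm{\nu}_0$ coefficient into the generic $\bm{\nu}_k$ summation and re-indexing.

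First, I would identify the three structurally distinct pieces in \eqref{eq:eq15}: the Bessel term $J_0(2j)\bm{\nu}_0$, the Bessel sums $J_{2k}(2j)\bm{\nu}_k$ for $k=1,\ldots,N-1$ that come from the first (single) traversal, and the $\rho=1,\ldots,R-1$ contributions. Comparing with \eqref{eq:eq16}, I would rewrite the first line of Corollary \ref{corollary:1} as
\[
\sum_{l=0}^{N-1} J_{2l}(2j)\,\bm{\nu}_l \;=\; J_0(2j)\,\bm{\nu}_0 \;+\; \sum_{k=1}^{N-1} J_{2k}(2j)\,\bm{\nu}_k,
\]
which immediately accounts for the single-traversal Bessel terms in \eqref{eq:eq15}.

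Next I would handle the $\rho$-sum. The key observation is that when $l=0$,
\[
J_{4(N+1)\rho + 2l}(2j) \;=\; J_{4(N+1)\rho - 2l}(2j) \;=\; J_{4(N+1)\rho}(2j),
\]
so the two inner sums in \eqref{eq:eq16} both contribute $J_{4(N+1)\rho}(2j)\,\bm{\nu}_0$ at $l=0$, giving a total of $2 J_{4(N+1)\rho}(2j)\,\bm{\nu}_0$, which matches the factor of $2$ multiplying $\bm{\nu}_0$ in \eqref{eq:eq15}. For $l=k\geq 1$, splitting off the $l=0$ term from the inner sums leaves exactly $\sum_{\rho=1}^{R-1}(J_{4(N+1)\rho+2k}(2j)+J_{4(N+1)\rho-2k}(2j))\,\bm{\nu}_k$, which is the $\rho$-part of the $\bm{\nu}_k$ coefficient in \eqref{eq:eq15}. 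Interchanging the order of summation (putting the sum over $\rho$ outside the sum over $k$) then produces the grouped form in \eqref{eq:eq16}.

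Finally, the correction term $-\tfrac{1}{2(N+1)}(\alpha\,\bm{e}+\beta\cos(2j)\,\bm{w})$ is identical in both expressions and carries over unchanged. The verification is essentially a bookkeeping argument; there is no real analytic obstacle, only the need to be careful that the $l=0$ doubling in the new formulation is precisely compensated by the factor $2$ that appears explicitly in Theorem \ref{theorem:1}. Since this is the only subtle point, the bulk of the proof consists of exhibiting the two expressions side by side and matching coefficients of each $\bm{\nu}_l$.
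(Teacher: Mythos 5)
Your proposal is correct and matches the paper's own (implicit) justification: the paper presents Corollary \ref{corollary:1} as a direct rewriting of Theorem \ref{theorem:1} ``in the ascending index of the Bessel function,'' which is precisely the coefficient-matching and summation-interchange argument you give, including the key point that the doubled $J_{4(N+1)\rho}(2j)$ contribution at $l=0$ reproduces the explicit factor of $2$ multiplying $\bm{\nu}_0$ in \eqref{eq:eq15}.
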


We can observe from \eqref{eq:eq16} that the $J_{4(N+1)\rho}$ term is repeated twice and there are always three missing terms when moving to the next traversal - e.g. $J_{2N}$, $J_{2(N+1)}$, $J_{2(N+2)}$ or $J_{6N+4}$, $J_{6N+6}$, $J_{6N+8}$, etc.

We can now construct the Hankel wave based on \eqref{eq:eq11}. We must consider the cases where $p = 2\rho(N+1), \> \rho = 0,\cdots,R-1$ where now $p = m+n, l+m+n$ or $l-(m+n)$. Let $m+n = k$; then $k$ can take values $2, 3, \cdots,2N$. Hence
$$ l \pm k = 2 \rho (N+1)$$
so
\begin{equation}
2l = 4(N+1)\rho \mp 2k, \quad k = 2, \cdots,2N, \quad \rho = 0, \cdots,R-1.
\label{eq:eq17}
\end{equation}
For these values $L(\theta) = N$.

Let $F_1, \cdots, F_{2N-1}$ be the $N \times N$ Hankel matrices in which 1's are on the $k$th backwards subdiagonal of $F_k$, and all other entries are zero. 
That is,
$$F_1 = \left( \begin{array}{cccc} 1 & 0 & \cdots & 0 \\  0 &  0 & \cdots & 0 \\  & & \cdots & \\ 0 & 0 & \cdots & 0  \end{array} \right), \quad F_2 =  \left( \begin{array}{ccccc} 0 & 1 & 0 & \cdots & 0 \\ 1 & 0 & 0 & \cdots & 0 \\  &&& \cdots & \\ 0 & 0 & 0 & \cdots & 0  \end{array} \right), \cdots, $$
\begin{equation}
 F_N = \left( \begin{array}{cccc} 0 & \cdots & 0 & 1 \\ 0 & \cdots & 1 & 0 \\  & \iddots && \\ 1 & \cdots & 0 & 0 \end{array} \right), \cdots,
 F_{2N-1} = \left( \begin{array}{ccccc} 0 & 0 & \cdots & 0 & 0 \\  0 & 0 & \cdots & 0 & 0 \\ && \cdots &&  \\  0 & 0 & \cdots &0 & 1  \end{array} \right).
 \label{eq:backward:hanel:shifts}
 \end{equation}
 Then we can construct the vector $H(j \Delta t)$ for these values of $l$ given in \eqref{eq:eq17}. It is easy to show that this leads to the term
$$ -\sum_{k=1}^{2N-1} (F_k + F_{2n-k})\, \sum_{\rho = 0}^{R-1} J_{4\rho(N+1)+2k+2}(2j)\,\bm{u}_0.$$
By the same argument as for the Toeplitz wave, the remaining component is
$$ \frac{1}{2(N+1)} (\alpha \bm{e} + \beta \cos(2j)\,\bm{w}).$$
This leads to~\Cref{theorem:2}.

\begin{theorem}
\label{theorem:2}
The Hankel wave over $R$ traversals is
\begin{eqnarray}
H(j \Delta t) &=& \frac{1}{2(N+1)} (\alpha \bm{e} + \beta \cos(2j)\,\bm{w}) \nonumber \\
&-& \sum_{k=1}^{2N-1} (F_k + F_{2n-k})\, \sum_{\rho=0}^{R-1} J_{4\rho(N+1)+2k+2}(2j)\,\bm{u}_0.
\label{eq:eq18}
\end{eqnarray}
\end{theorem}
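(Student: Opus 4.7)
The plan is to mirror the derivation of \Cref{theorem:1}, but starting from \eqref{eq:eq11} instead of \eqref{eq:eq10}. In the Hankel setting, the integer $p$ appearing in the inner sum $\sum_{k=1}^N \cos(pk\pi/(N+1))$ takes the values $m+n$, $l+m+n$, and $l-(m+n)$, so \Cref{lemma:6} again produces three cases: the inner sum equals $N$ when $p$ is a nonzero multiple of $2(N+1)$, equals $0$ when $p$ is odd, and equals $-1$ when $p$ is an even integer that is not a multiple of $2(N+1)$.

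First I would extract the ``traversal'' contribution, corresponding to $L(\theta)=N$. A key observation is that since $m+n\in\{2,\ldots,2N\}$ and $2N<2(N+1)$, the bare value $p=m+n$ can never trigger this branch. The only relevant configurations come from $l \pm (m+n) = 2(N+1)\rho$, that is $2l = 4(N+1)\rho \mp 2k'$ with $k'=m+n$ and $\rho \ge 0$ (subject to $l\ge 1$). With $k'$ held fixed, the matrix that selects all $(m,n)$ entries with $m+n=k'$ is, by the convention of \eqref{eq:backward:hanel:shifts}, the Hankel matrix $F_{k'-1}$. Summing over $\rho=0,\ldots,R-1$ (the $R$-traversal truncation) and reindexing so that the $-$ branch for index $k'$ reappears as an $F_{2N-k}$ contribution paired with the $+$ branch's $F_k$ contribution produces exactly the Hankel matrix sum $\sum_{k=1}^{2N-1}(F_k+F_{2N-k})\sum_{\rho=0}^{R-1} J_{4\rho(N+1)+2k+2}(2j)\bm{u}_0$, with the overall minus sign inherited from the $-(N+1)$ on the left-hand side of \eqref{eq:eq11}.

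Next I would handle the ``boundary correction'' terms coming from odd $p$ (which vanish) and from even $p$ not of the form $2(N+1)\rho$ (which contribute $-1$). Writing the leading coefficient $N$ from the first step as $(N+1)-1$ and folding the $-1$ piece into the correction, then summing over all $l\ge 1$ together with the $l=0$ term from $J_0$, produces a linear combination of the matrices $A$ and $B$ from \eqref{eq:A:B:simultaneous:TplusH} weighted by the infinite Bessel sums $J_0+2\sum J_{4l}$ and $2\sum J_{4l-2}$. \Cref{lemma:7} collapses these to $\tfrac12(1+\cos 2j)$ and $\tfrac12(1-\cos 2j)$, and the computation of \Cref{lemma:8} rewrites $\tfrac12(A+B)$ and $\tfrac12(A-B)$ as the rank-one outer products $\tfrac12\bm{e}\bm{e}^\top$ and $\tfrac12\bm{w}\bm{w}^\top$. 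Dividing by $N+1$ yields $\frac{1}{2(N+1)}(\alpha\bm{e}+\beta\cos(2j)\bm{w})$, which now appears with a \emph{plus} sign in the Hankel formula precisely because of the sign flip on the left-hand side of \eqref{eq:eq11} relative to \eqref{eq:eq10}. Assembling the two pieces gives \eqref{eq:eq18}.

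The main obstacle will be the index bookkeeping. In particular, one has to verify that the two families $l=2(N+1)\rho+k'$ and $l=2(N+1)\rho-k'$ (with $k'=m+n\in\{2,\ldots,2N\}$), after reindexing the latter via $k'\mapsto 2N+1-k'$ and $\rho\mapsto\rho-1$, combine cleanly into the symmetric pairing $F_k+F_{2N-k}$ for $k=1,\ldots,2N-1$ without double-counting boundary or self-symmetric cases. A secondary subtlety is ensuring that the correction produced by the odd and ``non-multiple even'' values of $p$ matches, up to its overall sign, the term $X$ of the Toeplitz derivation, so that no stray matrix contributions remain after the \Cref{lemma:7}/\Cref{lemma:8} simplification; this consistency also provides a useful sanity check against \eqref{eq:Twave:plus:Hwave:solution}.
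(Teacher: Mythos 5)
Your proposal follows the paper's own derivation of \Cref{theorem:2} essentially step for step: starting from \eqref{eq:eq11}, extracting the $L(\theta)=N$ configurations $2l = 4(N+1)\rho \mp 2(m+n)$ to obtain the $-\sum_{k}(F_k+F_{2N-k})$ Bessel sums, and disposing of the remaining even-$p$ contributions ``by the same argument as for the Toeplitz wave,'' i.e.\ via \Cref{lemma:7} and \Cref{lemma:8}, with the $+$ sign on the $\frac{1}{2(N+1)}(\alpha\bm{e}+\beta\cos(2j)\bm{w})$ term coming from the $-(N+1)$ on the left-hand side of \eqref{eq:eq11}. The only blemishes are two harmless slips in your bookkeeping remarks: the $L(\theta)=N$ branch of \Cref{lemma:6} also includes $p=0$ (not only nonzero multiples of $2(N+1)$), which your $\rho\ge 0$ treatment of $l-(m+n)=2(N+1)\rho$ correctly captures and which supplies the leading $\rho=0$ terms; and the reindexing of the $-$ branch should be $k'\mapsto 2N+2-k'$ on the values $k'=m+n$ (equivalently $k\mapsto 2N-k$ on the matrix indices $k=k'-1$), not $k'\mapsto 2N+1-k'$.
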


In a way that is analogous to what we did for the Toeplitz wave,  we 
rewrite \eqref{eq:eq18} for the Hankel wave in the ascending index of the Bessel function.

\begin{corollary}
The Hankel wave over $R$ traversals is 
\begin{eqnarray}
H(j \Delta t) &=& \frac{1}{2(N+1)}\, (\alpha \bm{e} + \beta \cos(2j) \,\bm{w}) \nonumber \\
&-& \sum_{\rho=0}^{R-1}\, \sum_{l=1}^{2N-1} J_{4\rho(N+1)+2l+2}(2j)\,\bm{\psi}_l, \label{eq:eq19}
\end{eqnarray}
where
$$\bm{\psi}_l = (F_l + F_{2N-l})\,\bm{u}_0, \quad l = 1, \cdots,N-1$$
and so $\bm{\psi}_{2N-l} = \bm{\psi}_l, \quad l = 1, \cdots,N$.
\end{corollary}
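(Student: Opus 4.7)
The plan is to obtain this Corollary as a direct rewriting of \Cref{theorem:2}, reorganising the double sum so that the Bessel indices appear in ascending order. No fresh analytical input is needed; the argument is essentially a relabelling together with an observation about symmetry.

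First I would take the formula \eqref{eq:eq18} from \Cref{theorem:2} as the starting point and simply swap the two summations, so that the outer sum is over $\rho = 0, \ldots, R-1$ and the inner sum is over $k = 1, \ldots, 2N-1$. This is legal because both sums are finite, and it places the Bessel indices $J_{4\rho(N+1)+2k+2}(2j)$ in ascending order within each traversal block indexed by $\rho$.

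Next I would define, following the statement, the vectors $\bm{\psi}_l := (F_l + F_{2N-l})\,\bm{u}_0$ for $l = 1, \ldots, 2N-1$, which converts the matrix factors $(F_k + F_{2N-k})\bm{u}_0$ appearing in \eqref{eq:eq18} into the single symbol $\bm{\psi}_k$ (after a trivial renaming of the summation index $k \mapsto l$). Substituting gives exactly the double sum displayed in \eqref{eq:eq19}, while the leading term $\frac{1}{2(N+1)}(\alpha \bm{e} + \beta \cos(2j)\bm{w})$ is inherited unchanged from \Cref{theorem:2}.

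Finally I would verify the stated symmetry $\bm{\psi}_{2N-l} = \bm{\psi}_l$. Replacing $l$ by $2N-l$ in the definition yields $\bm{\psi}_{2N-l} = (F_{2N-l} + F_{2N-(2N-l)})\bm{u}_0 = (F_{2N-l} + F_l)\bm{u}_0 = \bm{\psi}_l$, which is immediate from the definition of the backwards-diagonal matrices $F_k$ in \eqref{eq:backward:hanel:shifts}. There is no genuine obstacle here; the only point that deserves a sentence of care is confirming that the index shift $k \mapsto 2N-k$ stays within the range $\{1,\ldots,2N-1\}$, so that every $F_{2N-l}$ referred to is one of the matrices defined in \eqref{eq:backward:hanel:shifts}, and hence the double sum really does collapse onto the $\bm{\psi}_l$.
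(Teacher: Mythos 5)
Your proposal is correct and takes essentially the same route as the paper: the corollary is obtained there exactly as you describe, by rewriting \eqref{eq:eq18} from \Cref{theorem:2} with the finite sums reordered so the Bessel indices appear in ascending order, introducing the vectors $\bm{\psi}_l = (F_l + F_{2N-l})\,\bm{u}_0$, whose symmetry $\bm{\psi}_{2N-l} = \bm{\psi}_l$ follows immediately from the definition. No additional analytical content is required beyond this relabelling, so your argument matches the paper's.
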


In the case of the Hankel wave we can see that similarly to the Toeplitz wave there are three missing terms when moving to the next traversal - e.g. $J_{4N+2}$, $J_{4N+4}$, $J_{4N+6}$, but the indices are shifted by $2(N+1)$ from the Toeplitz case, which of course corresponds to a shift by the length of the domain.

\section{Simulations: The reflectionless properties of the Toeplitz and Hankel waves}
\label{sec:5}
We can now perform computer simulations of the semi-discrete model \eqref{eq:wave:semi:discrete} by computing the solution in  \eqref{eq:solution}.
That solution  \eqref{eq:solution} and thus a simulation  are computed via  the wave-kernel matrix function software of Nadukandi \& Higham \cite{WaveKernelsHigham}.
For reference, the solution coming from d'Alembert's formula (as if the problem were on the whole real line with no boundaries) is also included.
Separately, we can also simulate the Toeplitz waves in two different ways, using either \eqref{eq:Twave:T} or the Bessel expansions \eqref{eq:eq15}.
Likewise, we can simulate the Hankel waves in two different ways, using either \eqref{eq:Hwave:H} or \eqref{eq:eq18}. 
The sum of the Toeplitz wave and the Hankel wave is always exactly equal to the solution to the wave equation with Dirichlet conditions, as in \eqref{eq:Twave:plus:Hwave:solution}, and this property is consistent with results of the simulations, as expected.
  
 Consider the simulation in  \cref{fig:wave}.
 The simulation begins with a smooth symmetric Gaussian, centered on $x=0$ as an initial condition.
 \Cref{fig:wave} shows the solution at three time-points: before, during, and after the first moment at which the wave reaches the boundary. 
 The following points are notable.
 \begin{itemize}
\item \textit{Before} the wave reaches the boundary (this corresponds to $t \leq 1$), the solution is ``purely Toeplitz'' and the Hankel part of the solution is a spatial and temporal constant that is nearly zero -- this constant is discussed in \cref{sec:discussion}.
\item  \textit{After} the wave reaches the boundary, the opposite is true: the solution is ``purely Hankel,'' and the Toeplitz part of the solution is a constant.
(And this constant is of the opposite sign to the constant in the dot point above.) 
This corresponds to $1 \leq t \leq 3$.

\item If the simulation is run for larger time values, then this behaviour is  repeated, in the sense that on any even traversal the Toeplitz wave is constant, while on any odd traversal the Hankel wave is constant.  
\end{itemize}


 Naively, one might be tempted to incorporate the  condition \eqref{eq:orr-sommerfeld}  into simulations in the semi-discrete setting, in the hope of achieving a reflectionless boundary condition.
 Unfortunately this leads to disappointment;  such a simulation does show reflections at the boundary (albeit small reflections that can be reduced by refining the discretization).
 The issue is that the condition \eqref{eq:orr-sommerfeld} is only reflectionless in the fully continuous setting, whereas in computer simulations, there must be some type of discretization (for example, in the semi-discrete setting, we might try a finite difference approximation).
 With this in mind, the reflectionless behaviour that can be achieved with the Toeplitz and Hankel waves on even or odd traversals of the domain  (\cref{fig:wave}) for which  we have derived exact methods of simulation,  and an accompanying analysis to explain the behaviour, is especially satisfying.
 
In \cref{fig:wave2} we show  the Toeplitz wave (top left) computed via \eqref{eq:Twave:T} or \eqref{eq:eq15} with $R = 3$ and $t = 3.7$ on the third traversal, and the Hankel wave computed via \eqref{eq:Hwave:H} or \eqref{eq:eq18} with $R = 3$ and $t = 5.7$ on the fourth traversal (top right). 
The good agreement that we see numerically between these two different methods of computing the solution is a check that the theorems and formulas that we present are correct.
The bottom left panel shows the Toeplitz wave  computed via \eqref{eq:Twave:T} or \eqref{eq:eq15} with $R = 3$ and $t = 2 \pi R + 1$.
This purpose of this example is to illustrate that the formulas \eqref{eq:eq15} we derive are a constant, near zero, for all $t > 2 \pi R + 1$, whereas the exact solution to the semi-discrete problem may be complicated for large $t$.
This panel also showcases our ability to control and choose in advance the number (through the choice of $R$) of  reflections in a simulation.
The effect of reducing the spatial discretization error can be seen by comparing the top right panel ($N=301$) with the bottom right panel ($N=1001$).
(Note that our formulas for the semi-discrete problem are exact, and the computations of the solution to the semi-discrete problem shown here are so accurate that they may be regarded as exact solutions.  
The phrase discretization error here is used to indicate the comparison of the solution of the continuous problem to the semi-discrete solution.)
With a smaller $N$, and thus a larger dicretization error,
we see that the semi-discrete problem becomes more oscillatory.

The integer $R \in \{0,1,2,\ldots \}$  appears in our formulas \eqref{eq:eq15} and \eqref{eq:eq18} as the number of terms in a sum.
That number $R$ can be chosen by the user.
The figures illustrate that $R$ can be thought of as the number of full wave traversals of the whole domain.
For example, the choice $R=0$ would allow simulation of the top panel of  \cref{fig:wave}, starting from an  initial condition that is symmetric about $x=0$, and simulating  the first half traversal of the domain. 
Whereas to use the formulas to simulate the bottom panel of  \cref{fig:wave}, would require choosing $R \ge 1$ .

Two benefits that our analysis brings to these simulations are noteworthy.
First, we are able to choose in advance the number of reflections.
For example, we could ask for a simulation that allows exactly two reflections, by choosing $R=2$.
In such a simulation, after the second reflection, the Toeplitz waves and the Hankel waves (computed using our formulas that involve summation to an upper limit $R$) vanish for all larger values of time.
This is not possible with existing methods of simulation, such as via the software for the wave-kernel matrix functions \cite{WaveKernelsHigham} that we have also included for comparison here.
Second, we are able to explain why the Toeplitz wave is reflection-less (on even traversals of the domain) in simulations. 
For example, we are able to explain why the Toeplitz wave is perfectly reflection-less the very first time it hits the boundary. 
This would not be possible without the expressions for the Toeplitz wave that we derived.

 \begin{figure}
\centering
\begin{tabular}{c}
\includegraphics[scale=0.4]{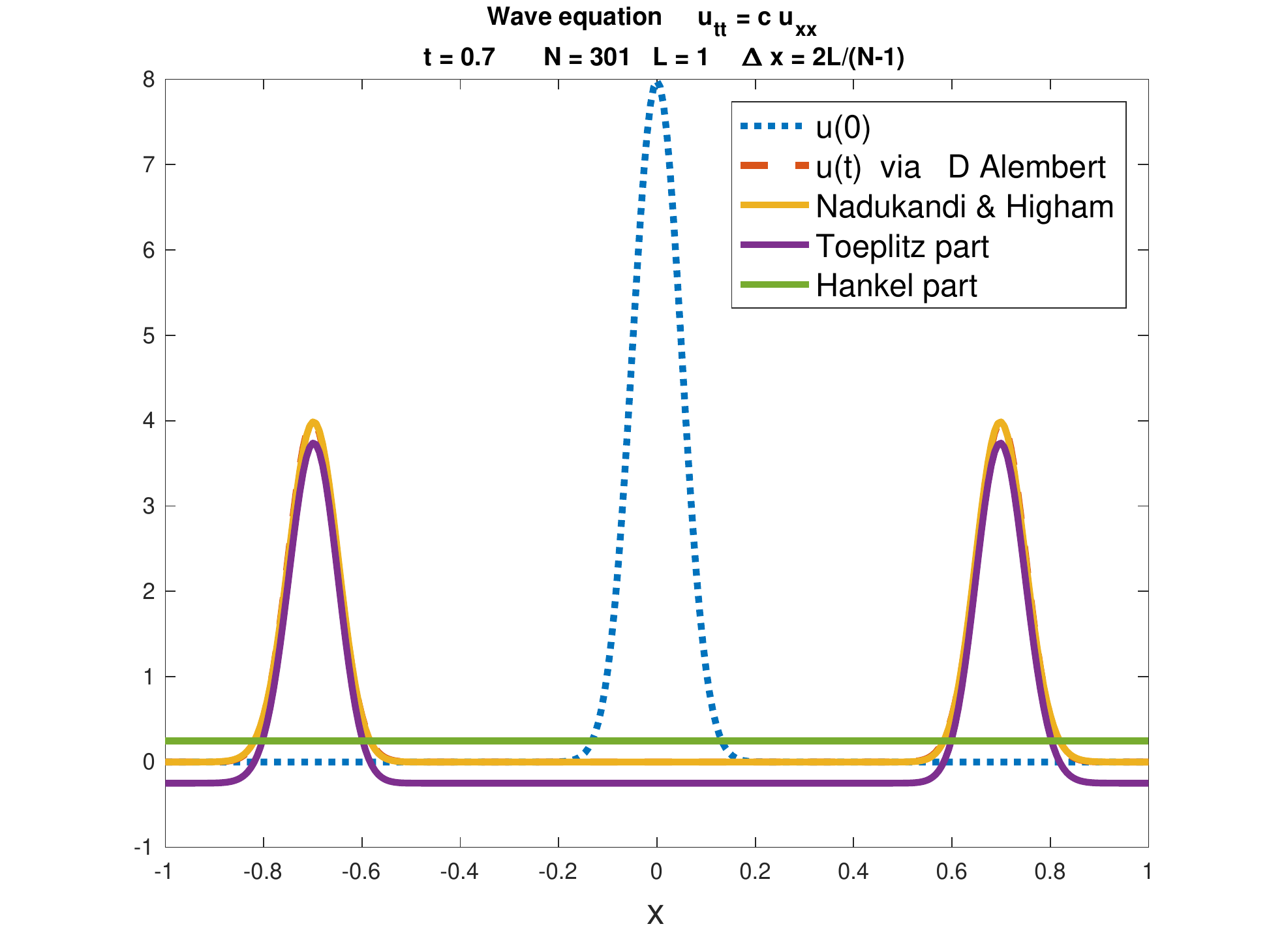}  \\
\includegraphics[scale=0.4]{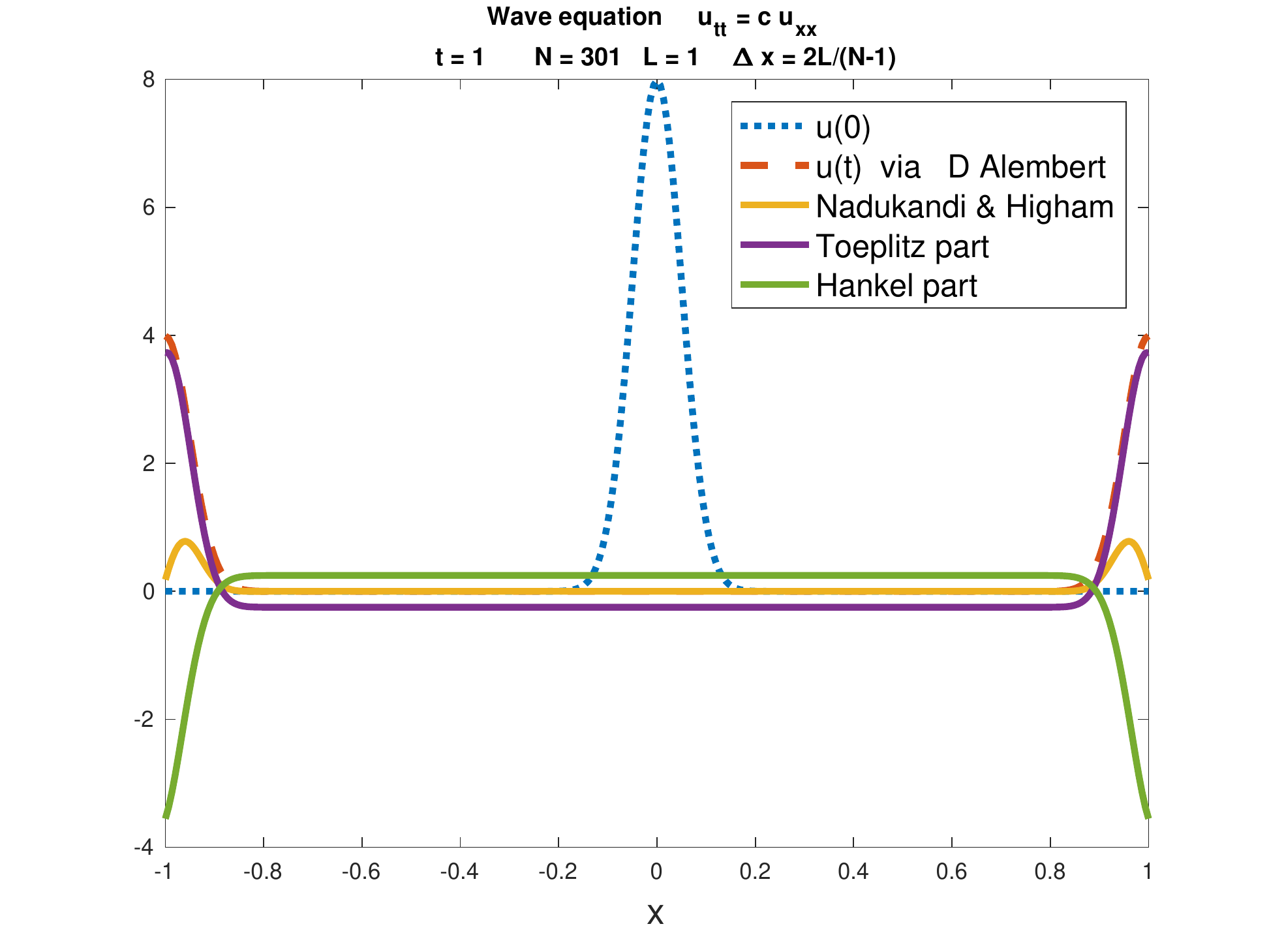} \\
\includegraphics[scale=0.4]{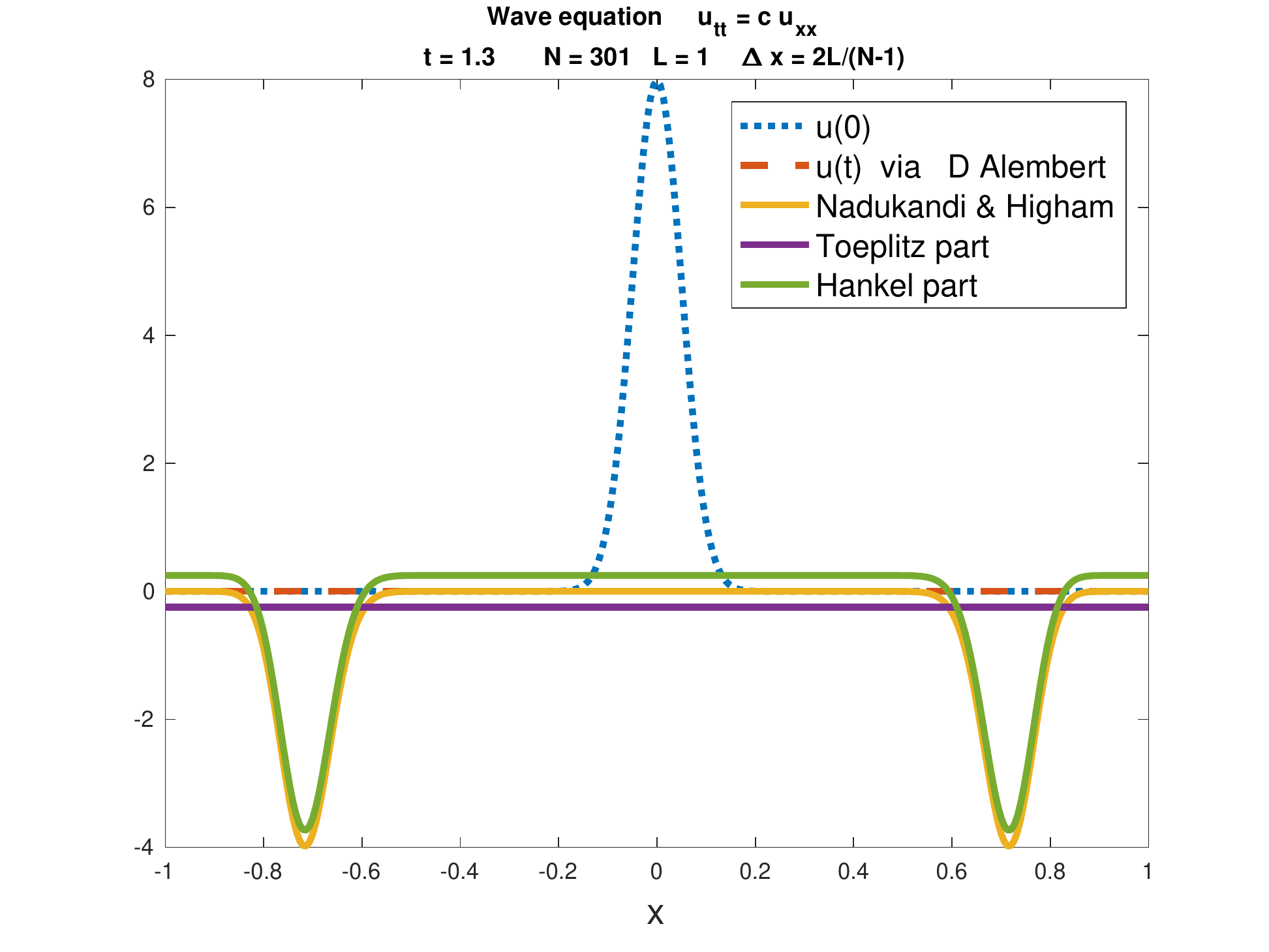}
\end{tabular}
\caption{Before (top), during (middle) and after (bottom) the wave first reaches the boundary.
The Toeplitz wave is  reflectionless on the even traversals.
}
\label{fig:wave}
\end{figure}

\begin{figure}
\centering
\begin{tabular}{cc}
\includegraphics[scale=0.35]{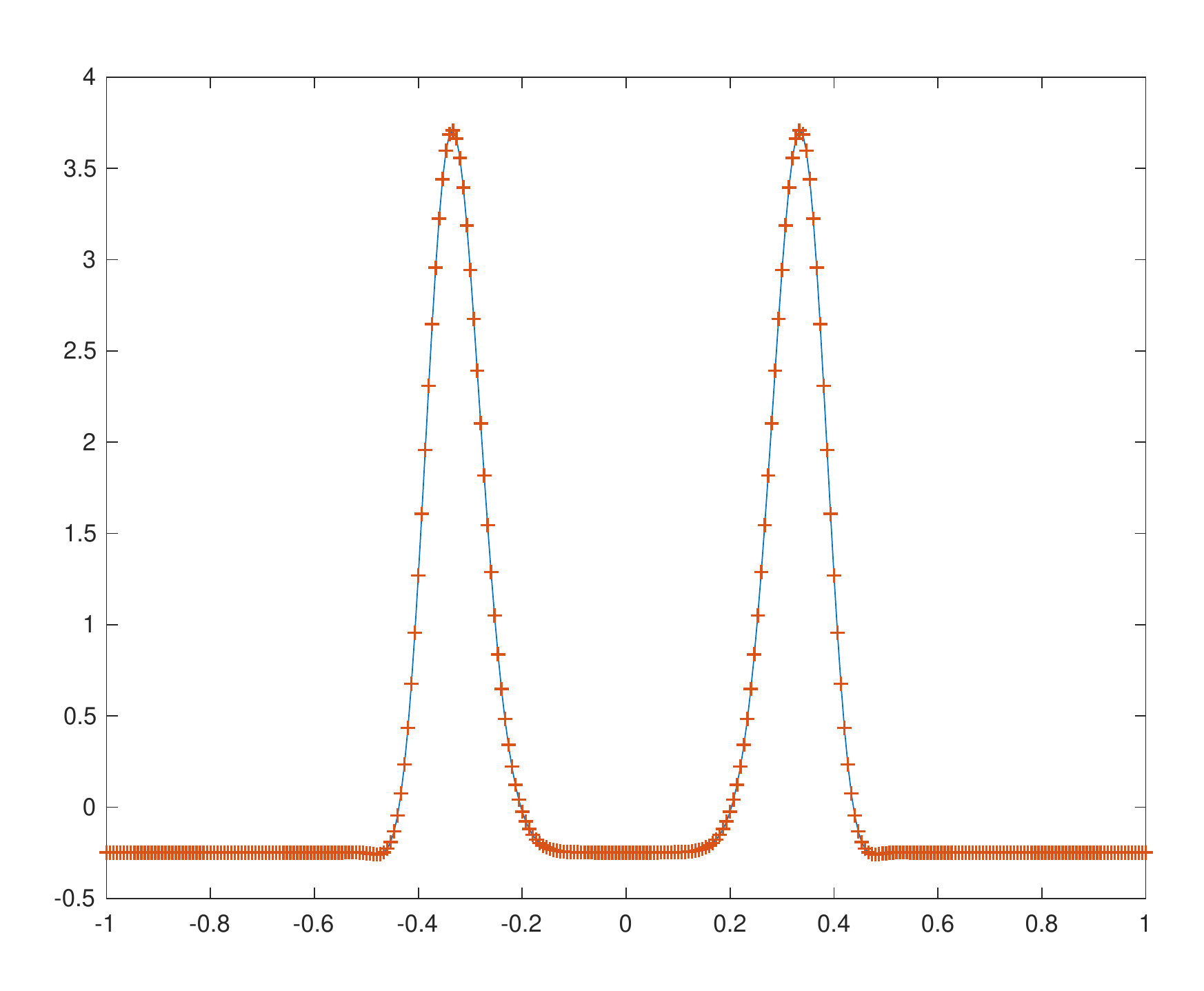} &
\includegraphics[scale=0.35]{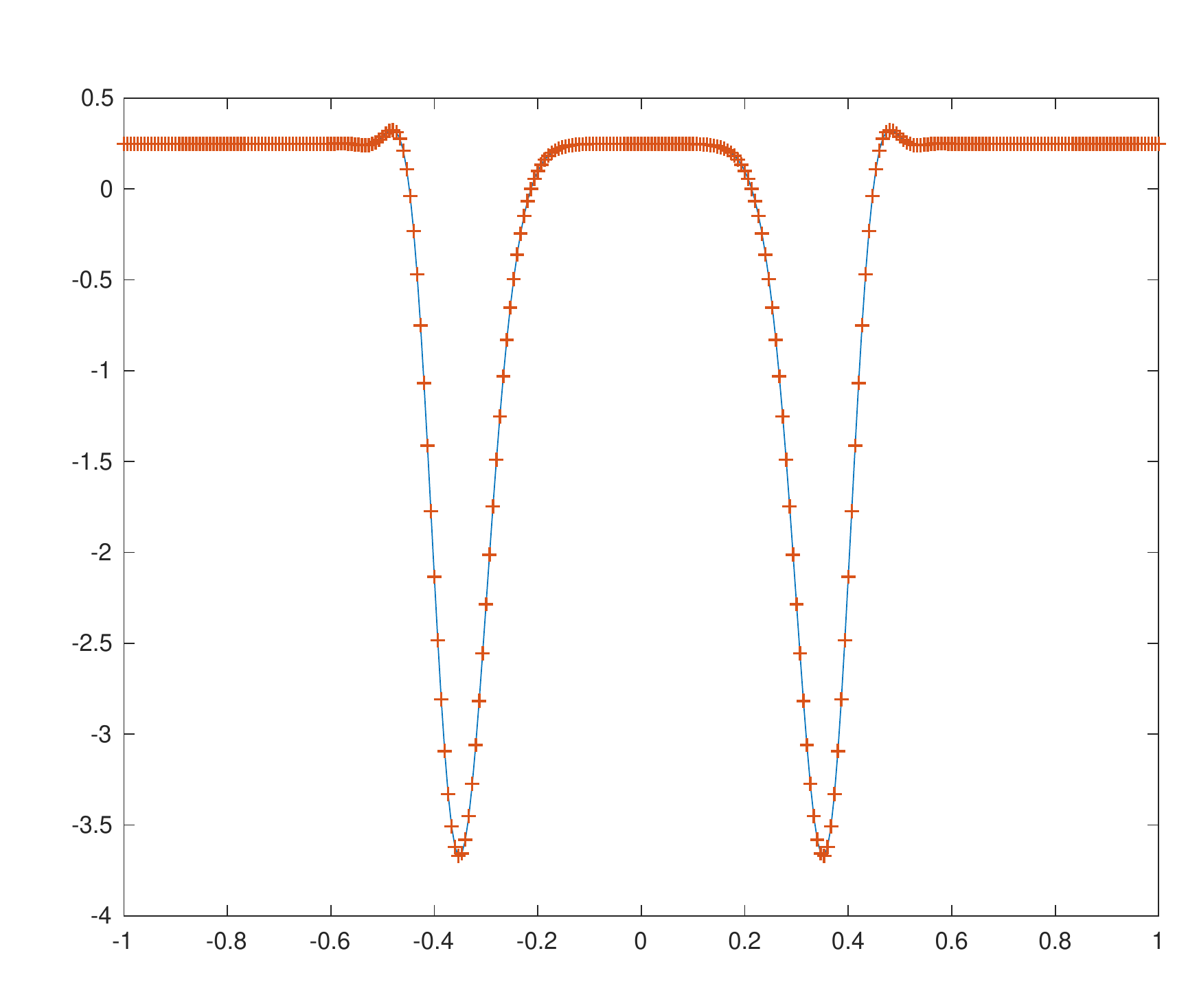} \\
\includegraphics[scale=0.35]{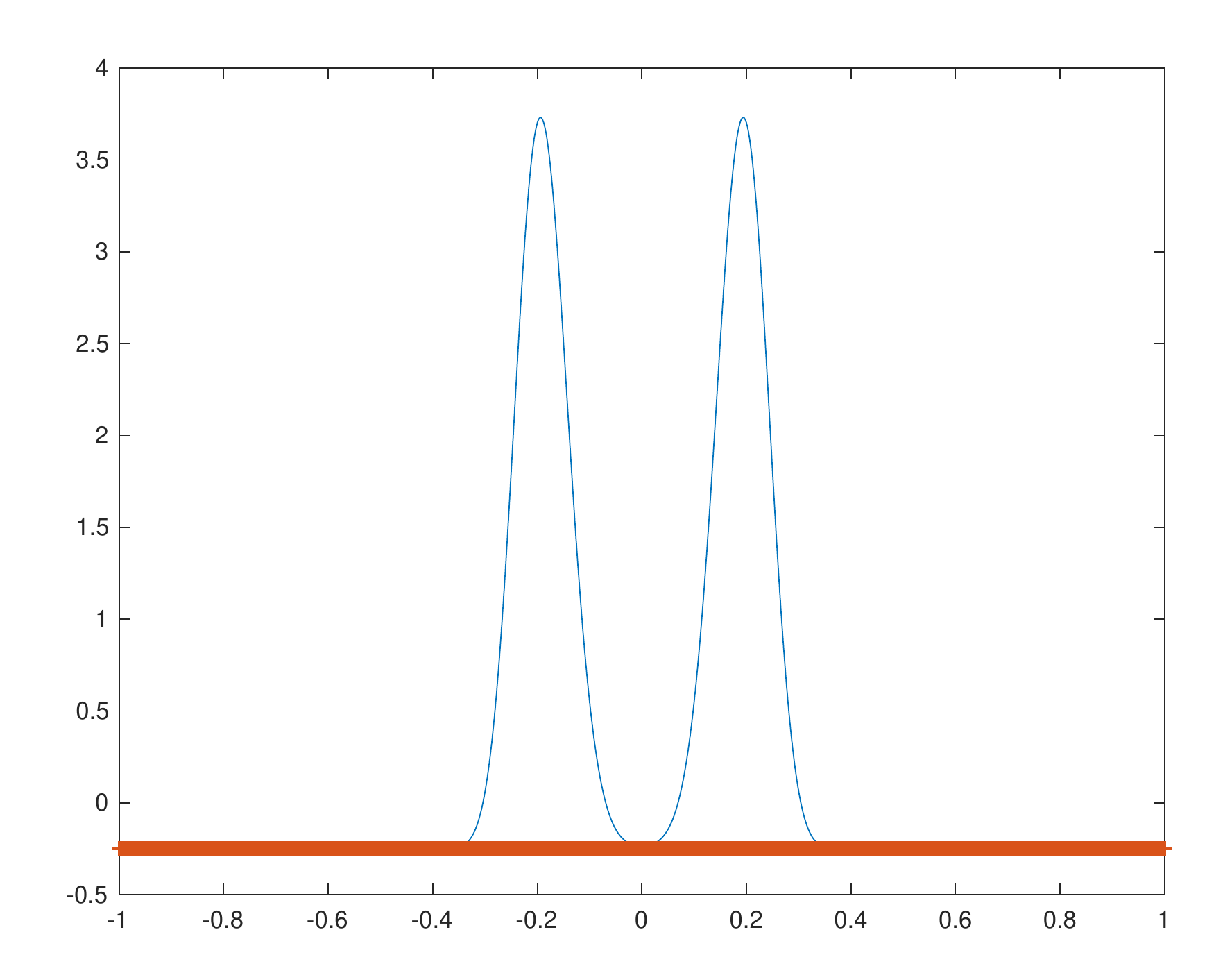} &
\includegraphics[scale=0.35]{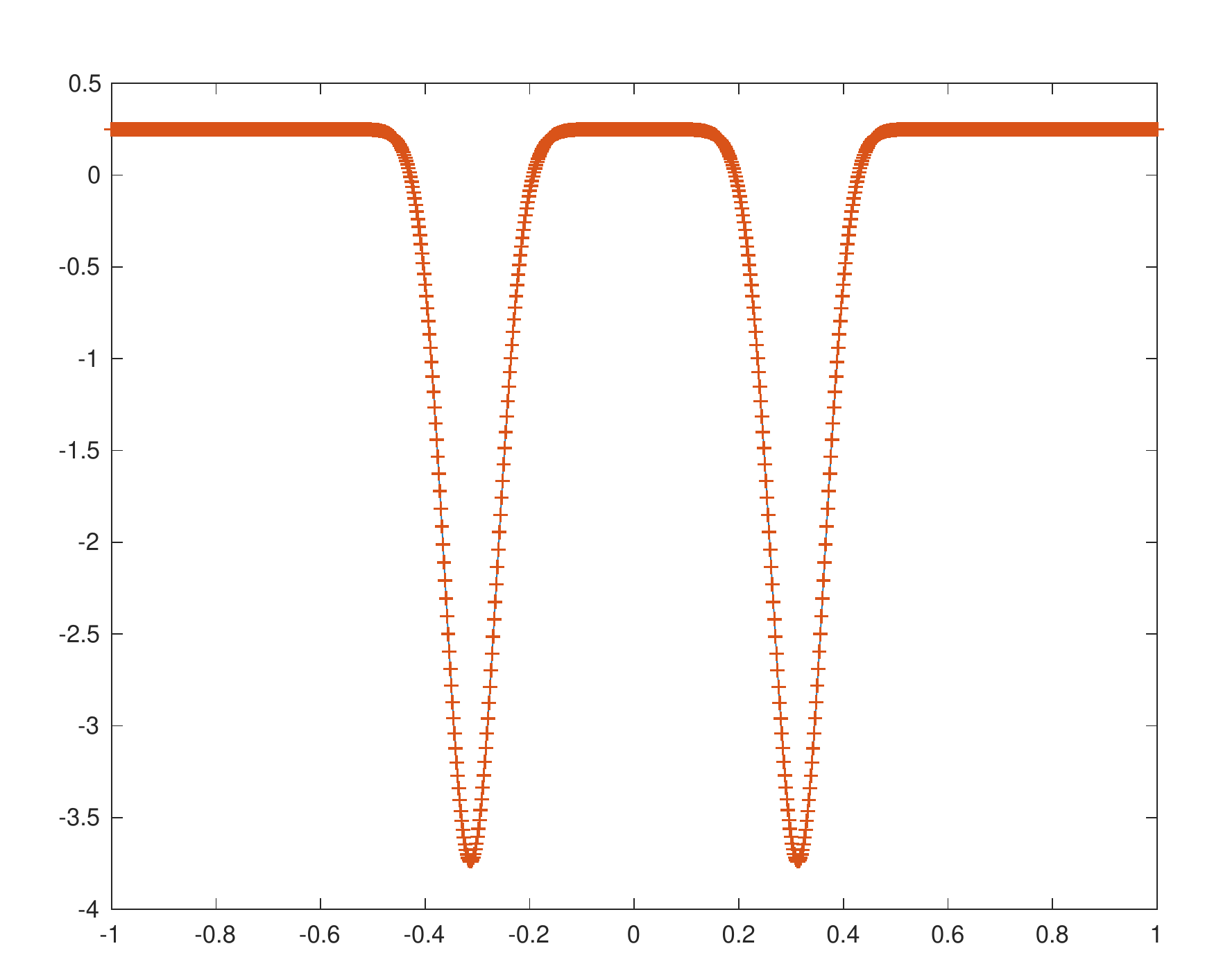}
\end{tabular}
\caption{Top: 
The Toeplitz wave at $t = 3.7$ (left) and Hankel wave at $t = 5.7$ (right).
The solid line is computed via \eqref{eq:Twave:T} (Toeplitz wave) and  \eqref{eq:Hwave:H} (Hankel wave),  and the crosses are computed via \eqref{eq:eq15} (Bessel expansion for Toeplitz wave) and  \eqref{eq:eq18} (Bessel expansion of Hankel wave).
 Top panel is computed with $R = 3$ and $N=301$.
 Bottom left: Toeplitz wave with
 $R=3$, $N=1001$, and $t= 2 \pi R + 1$.
 Bottom Right: The same as top right, except that $N=1001$.}
\label{fig:wave2}
\end{figure}

\section{Discussion and Future Work}
\label{sec:discussion}
Based on our previous analysis, we can make a number of observations.
\begin{itemize}
\item The term $X = \frac{1}{2(N+1)} (\alpha \bm{e} + \beta \cos(2j)\,\bm{w})$ is a function of the spatial  discretisation error in the sense that as $N$ increases, this term converges linearly to some limit that depends on the initial condition.
\item For the initial condition 
$$\bm{u}_0 = \frac{1}{\sqrt{2\pi} \sigma} e^{-\frac{x^2}{2\sigma^2}}, \quad x \in [-1, 1], \quad \sigma = 0.05$$
then we can show $$ \alpha = \bm{e}^\top\,\bm{u}_0 = \frac{1}{2}(N-1)$$
and $\beta = \bm{w}^\top \bm{u}_0$ is essentially 0 for even modest values of $N$, e.g. $N=101$. Thus $X$ is $\frac{1}{4}(1-\frac{2}{N+1})$ which converges to $\frac{1}{4}$ as $N \rightarrow \infty$. It is interesting that this term is not zero.
\item Note that the sum of the Toeplitz wave and the Hankel wave is the full wave and the $X$ term cancels. The constant component of the Toeplitz and Hankel waves as $N$ becomes large is $-\frac{1}{4}$ or $\frac{1}{4}$, respectively.
\item 
The user of our formulas  in \eqref{eq:eq15} and \eqref{eq:eq18}  can choose the value of the parameter $R \in \{ 0,1,2, \ldots, \}$.
This is a way to control the number of traversals of the domain of the Toeplitz and Hankel waves.
\item 
For $t>2 \pi R$, the exact formulas that we present in \eqref{eq:eq15} and \eqref{eq:eq18}  are constant.
For $t< 2 \pi R$, the exact formulas  in \eqref{eq:eq15} and \eqref{eq:eq18} that we derive are exactly the Toeplitz and Hankel waves, and these sum exactly to the solution of the semi-discrete problem.
\item  
For the Toeplitz wave, the matrices $E_k$ and $E_k^\top, \> k=1,\cdots,N-1$ act as shift matrices that allow the wave to move. 
In the case of the Hankel wave, the corresponding matrices are the Hankel matrices $F_1, \cdots, F_{2k-1}$. 
Thus one way to think about wave propagation is through these elementary Toeplitz and Hankel matrices.
\item There is a rich theory on the sums of Bessel functions of the form
\begin{equation}
f(z) = a_0 J_0(z) + 2 \sum_{k=1}^\infty a_k J_k(z).
\label{eq:eq20}
\end{equation}
These are called Neumann series \cite{Neumann1867}. Here 
$$ a_k = \frac{1}{2\pi i} \int_{|z|=c} f(t) \theta_k(t)\,dt,$$
$c$ is an appropriate contour, and the $\theta_k(t)$ are Neumann polynomials given by
\begin{eqnarray*}
\theta_0(t) &=& \frac{1}{t} \\
\theta_k(t) &=& \frac{1}{4} \sum_{l=0}^{[\frac{k}{2}]} \frac{(k-l-1)!}{l!} k \left(\frac{2}{t}\right)^{k-2l+1} .
\end{eqnarray*}

\end{itemize}

There are generalisations of (\ref{eq:eq20}) (see Watson \cite{Watson1922}) of the form
$$ z^\eta f(z) = a_0 J_\eta(z) + 2 \sum_{k=1}^\infty a_k J_{\eta +k}(z).$$
More recently Pogany and S\"{u}li \cite{Pogany09} have given integral representations for certain Neumann series including
$$\sum_{k=0}^\infty a_{k \eta} J_{\eta + 2k + 1}(z), \quad \eta \geq -\frac{1}{2}$$
$$a_{k \eta} = 2(\eta + 2k + 1) \int_0^\infty t^{-1} f(t) J_{\eta+2k+1}(t) dt.$$
Al-Jarrah et al. \cite{AlJarrah2002} have considered Neumann series in the context of the Fourier cosine transform
\begin{equation}
F_c(x) = \sqrt{\frac{2}{\pi}} \int_0^\infty f(t) \cos(xt) dt
\label{eq:eq21}
\end{equation}
and derived formulas for 
$$F_c(0) + 2 \sum_{k=1}^\infty F_c(2\pi k \eta)$$
for a given $f(t)$. We note that Bessel functions can be cast in the form of (\ref{eq:eq21}).

Since the Toeplitz and Hankel waves, defined over a finite number of traversals, can be considered as certain finite sums of even Bessel functions of the first kind, these are then finite Neumann series and some of this theory could be of interest in this context.
We speculate that generalisations of these ideas might be applicable to other applications of Bessel function expansions to waves \cite{movchan2009wave}, or to the analysis of  time series data arising in wave phenomena   \cite{pethiyagoda2017spectrograms}.


\noindent  
It is worth making some final remarks about the subject of inverse problems.
As mentioned in the Introduction, it is not the purpose of our article to study inverse problems. 
That would require extending our analysis here to a wave equation with variable coefficients, and also extending to higher dimensions.
Nonetheless, if we view our analysis here as a first step towards inverse problems, then we can make the following observations about how such an extension, in one dimension, may be carried out in future work.
Commonly, a wave propagates at different speeds, $c^2(x)$, at different spatial locations.
In an inverse problem, this speed $c^2(x)$ might initially be unknown, and the goal is to estimate $c^2(x)$ at many points $x$, based on known observational data about the wave.
One tool in that process of estimation, is called a forward solver, which given $c^2(x)$ at all locations, is then able to simulate the waves.

As a simple example, suppose that the domain consists of two halves, and that the wave speed is $c^2=1$ in the first half of the domain and $c^2=2$ in the second half. 
One possible model of this situation can come from scaling our finite matrix $\bm{K}$ by a diagonal matrix that encodes the wave speed. 
For example, $\bm{C} =
\textrm{diag}(1,\cdots,1,2,\cdots,2)$ and $\bm{L} = \bm{CK}$.
It is tempting to apply our `Toeplitz-plus-Hankel' analysis to this situation.
However, the matrix $\bm{L}$ 
\textit{cannot} be written exactly as the sum of a Toeplitz matrix and a Hankel matrix.
Thus, it is impossible to write all matrix functions exactly as a sum of a Toeplitz part and a Hankel part.
We can still compute the solution to this  wave equation with variable speed as a matrix function by, for example, employing the wave-kernel matrix functions of Nadukandi \& Higham \cite{WaveKernelsHigham}, but we should not expect that matrix function to be simply Toeplitz-plus-Hankel.
In other words, we cannot expect to repeat our previous analysis for the constant coefficient case verbatim in this new variable coefficient case.

Nevertheless, it seems likely that  the analysis that we have presented  can be generalised to handle this setting of variable coefficients.
The starting point will be to again examine the algebraic structure of the solutions, and in analogy with the way we investigated $\bm{v} \bm{v}^{\top}$ here, particularly to examine the rank one projection matrices  coming from the spectral decomposition of the operator $\bm{L}$.
Encouragingly, the non-symmetric matrix in $\bm{L}$ is an example of a tridiagonal $k$--Toeplitz matrix (in our simple example, it is a $2$--Toeplitz matrix), for which exact expressions for the eigenvalues and for the eigenvectors are known \cite{alvarez2005some}.
The first step is to compute expressions for the rank one projection matrices coming from those eigenvectors, and this will be the subject of future work.

\section*{Acknowledgments}
This work was completed at the University of Oxford. We would like to thank Professors Endre S\"uli and Nick Trefethen of the Mathematical Institute  at the University of Oxford for their comments. The second author would like to thank Professor Nick Trefethen for hosting her visit at the Mathematical Institute in November and December 2018.

 \bibliographystyle{plain}
\bibliography{shevRefs}

\end{document}